\documentclass[11pt]{article}
\usepackage{booktabs}
\usepackage{graphicx}
\usepackage{caption}
\usepackage{subcaption}
\usepackage{enumitem}
\usepackage[a4paper,margin=1in]{geometry}
\usepackage{amsmath,amssymb,amsthm,mathrsfs,bm}
\usepackage{float}
\usepackage{booktabs}
\usepackage{tikz}
\usetikzlibrary{arrows.meta,calc,patterns,decorations.markings}
\usepackage{cite}
\newtheorem{theorem}{Theorem}[section]
\newtheorem{lemma}[theorem]{Lemma}
\newtheorem{proposition}[theorem]{Proposition}
\newtheorem{remark}{Remark}[section]

\numberwithin{equation}{section}

\newcommand{\Rp}{\mathbb{R}_+}
\newcommand{\Mplus}{M_+}
\newcommand{\mB}{\mathrm{B}}
\newcommand{\me}{\mathrm{e}}

\definecolor{figred}{RGB}{215,35,35}
\definecolor{figgreen}{RGB}{35,150,90}
\definecolor{figblue}{RGB}{75,120,190}
\usepackage[colorlinks=true,linkcolor=blue,citecolor=blue,urlcolor=blue]{hyperref}
\tikzset{
  axis/.style={->,thick},
  guide/.style={gray,dashed,thin},
  tangent/.style={gray!70,dashed,semithick},
  physical/.style={gray,densely dashed,thin},
  phase/.style={very thick,figred,postaction={decorate},
    decoration={markings,mark=at position 0.62 with {\arrow{Stealth[length=2mm]}}}},
  phaseb/.style={very thick,figred,postaction={decorate},
    decoration={markings,mark=at position 0.78 with {\arrow{Stealth[length=2mm]}}}},
  blackphase/.style={very thick,black,postaction={decorate},
    decoration={markings,mark=at position 0.60 with {\arrow{Stealth[length=2mm]}}}},
  greenfield/.style={figgreen!85!black,thin},
  grayfield/.style={gray!65,thin},
  garr/.style={-{Stealth[length=1.8mm]},figgreen!85!black,thin},
  barr/.style={-{Stealth[length=1.8mm]},figblue!85!black,thin},
  state/.style={circle,draw=black,fill=white,inner sep=1.4pt},
  regionfill/.style={pattern=dots,pattern color=figred!45}
}

\title{Existence of Large Boundary Layer Solutions for the Outflow Problem of Full Compressible Navier-Stokes Equations}
\author{
	Tianle Wang\thanks{
		Institute of Applied Mathematics, Academy of Mathematics and Systems Science, Chinese Academy of Sciences, Beijing 100190, P. R. China and School of Mathematical Sciences, University of Chinese Academy of Sciences, Beijing 100049, P. R. China (\href{mailto:wangtianle25@mails.ucas.ac.cn}{wangtianle25@mails.ucas.ac.cn}).
	},
	Yi Wang\thanks{
		State Key Laboratory of Mathematical Sciences and Institute of Applied Mathematics, Academy of Mathematics and Systems Science, Chinese Academy of Sciences, Beijing 100190, P. R. China and School of Mathematical Sciences, University of Chinese Academy of Sciences, Beijing 100049, P. R. China (\href{mailto:wangyi@amss.ac.cn}{wangyi@amss.ac.cn}). The
work of Yi Wang is partially supported by NSFC grants (Grant No.s  12288201 and 12421001)
and CAS Project for Young Scientists in Basic Research, Grant No. YSBR-031. 
	},
	and Qiuyang Yu\thanks{
		Institute of Applied Mathematics, Academy of Mathematics and Systems Science, Chinese Academy of Sciences, Beijing 100190, P. R. China and School of Mathematical Sciences, University of Chinese Academy of Sciences, Beijing 100049, P. R. China (\href{mailto:yuqiuyang@amss.ac.cn}{yuqiuyang@amss.ac.cn}).
	}
}
\date{}

\begin{document}

\maketitle

\begin{abstract}

We investigate the existence and non-existence of large-amplitude boundary layer solutions to the outflow problem for the one-dimensional full compressible Navier-Stokes equations on the half-line $\mathbb{R}_+$.
Through a delicate global phase-plane analysis, we substantially extend the small-amplitude boundary layer solutions obtained via the center-manifold approach to the large-amplitude regime. Based on the sign of $M_+-1$ (with $M_+$ denoting the Mach number at the right end state) and the Prandtl-number-related parameter $\zeta$, we provide a complete characterization of the existence and non-existence of large-amplitude boundary layer solutions for this half-space outflow problem.
In sharp contrast to the corresponding inflow problem, boundary layer solutions to the outflow problem are permitted to be non-monotone, and their existence region in the phase plane can be an open set.

\end{abstract}

\noindent\textbf{Keywords:} compressible Navier-Stokes equations; outflow problem; large-amplitude boundary layer solutions; phase-plane analysis.

\tableofcontents

\section{Introduction}

In this paper, we consider the one-dimensional full (or non-isentropic) compressible Navier-Stokes-Fourier equations on the half-line $\Rp:=(0,+\infty)$:
\begin{equation}
\begin{cases}
\rho_t+(\rho u)_x=0,\qquad\qquad\qquad   t\in\Rp,\ x\in\Rp,\\
(\rho u)_t+(\rho u^2+p)_x=\mu u_{xx},\\
\displaystyle
\left[\rho\left(e+\dfrac{u^2}{2}\right)\right]_t
+\left[\rho u\left(e+\dfrac{u^2}{2}\right)+pu\right]_x
=\kappa\theta_{xx}+\mu(uu_x)_x.
\end{cases}
\label{eq:1.1}
\end{equation}
Here $\rho=\rho(t,x)>0$, $u=u(t,x)$, and $\theta=\theta(t,x)>0$ respectively denote the density, velocity, and absolute temperature of the fluid. The thermodynamic variables $p=p(\rho,\theta)$ and $e=e(\rho,\theta)$ denote the pressure and specific internal energy, while the viscosity coefficient $\mu$ and thermal conductivity coefficient $\kappa$ are both assumed to be positive constants. For an ideal polytropic gas, the corresponding constitutive relations take the form:
\begin{equation}
p=R\rho\theta=A\rho^\gamma\exp\!\left(\dfrac{\gamma-1}{R}s\right),
\qquad
e=\dfrac{R\theta}{\gamma-1}+\mathrm{const},
\label{eq:1.2}
\end{equation}
where $s=s(t,x)$ stands for the entropy, $\gamma>1$ is the adiabatic exponent, and $A,R>0$ are universal gas constants.

We equip the system \eqref{eq:1.1} with the initial data
\begin{equation}
(\rho,u,\theta)(0,x)=(\rho_0,u_0,\theta_0)(x)
\longrightarrow(\rho_+,u_+,\theta_+),\quad
{\rm as}\quad x\to+\infty,
\label{eq:1.3}
\end{equation}
which satisfies the positivity constraints
\[
\inf_{x\in\Rp}\rho_0(x)>0,
\quad
\inf_{x\in\Rp}\theta_0(x)>0,
\quad
\rho_+>0,
\quad
\theta_+>0.
\]
Meanwhile, we impose the following outflow boundary condition at the spatial origin:
\begin{equation}
u(t,0)=u_-<0,
\qquad
\theta(t,0)=\theta_->0.
\label{eq:1.4}
\end{equation}
Note that for the
outflow problem \eqref{eq:1.1}-\eqref{eq:1.4} and the impermeable wall problem‌ (i.e., $u_-=0$), the density cannot be prescribed at the boundary $x=0$, due to the well-posedness of the hyperbolic mass equation $\eqref{eq:1.1}_1$. For the inflow problem with $u_->0$, however, an additional boundary condition for density must be imposed at $x=0$, see Matsumura \cite{Matsumura2001}. 

One of the central problems in the mathematical theory of viscous compressible fluids is to investigate the large-time asymptotic behavior of solutions to the governing equations.
For the Cauchy problem on the whole line $\mathbb{R}$, the large-time behavior of solutions to the compressible Navier-Stokes equations \eqref{eq:1.1} is closely related to the Riemann solutions of the corresponding Euler system, which consists of shocks, rarefaction waves, and contact discontinuities. Since the pioneering work of Il'in and Oleinik \cite{IlinOleinik1960}, the stability of these basic wave patterns and their combinations has been extensively studied by using energy estimates, spectral methods, Green's function techniques, and contraction arguments, as documented in representative works including \cite{Liu1985,LiuXin1988,SzepessyXin1993,Goodman1986,FreistuhlerSerre1998,MasciaZumbrun2004,HuangXinYang2008,HuangLiMatsumura2010,LiuZeng2015}. Very recently, Kang, Vasseur, and Wang \cite{KangVasseurWang2023,KangVasseurWang2025} successfully proved the time-asymptotic stability of generic Riemann solutions for both barotropic and full compressible Navier-Stokes systems by using $a$-contraction method.

On the half-line $\mathbb{R}_+$, the presence of a physical boundary at $x=0$ may generate a new type of stationary wave, known as a boundary layer solution to the compressible Navier-Stokes equations.
For the barotropic compressible Navier-Stokes system, Matsumura \cite{Matsumura2001} first established a rigorous criterion that characterizes the emergence of such boundary layer solutions, which can be naturally extended to the full non-isentropic Navier-Stokes-Fourier system \eqref{eq:1.1}, as subsequently demonstrated in \cite{HuangLiShi2010}. 
Intuitively, the necessity of a boundary layer solution can be determined by examining the compatibility between the boundary data and the far-field state: one first verifies whether the Riemann solution of the corresponding Euler system, with the possible left state given by the prescribed boundary data and the right state given by the far-field asymptotic state, satisfies the imposed boundary condition at $x=0$.
 If the compatibility condition holds, no extra near-boundary adjustment is required. Otherwise, a non-trivial boundary layer solution must emerge to resolve the mismatch, smoothly connecting the prescribed boundary state to the far-field state by the stationary solution. For the half space problem \eqref{eq:1.1}, such large-time boundary layer phenomena can arise in both the inflow and outflow regimes.

The existence and stability of boundary layer solutions for viscous fluids have attracted considerable attention over the past decades, leading to a rich body of literature. In the context of the inflow problem, Matsumura and Nishihara \cite{MatsumuraNishihara2001} first studied the existence and stability of boundary layer solutions to the isentropic compressible Navier-Stokes system, and then Huang, Matsumura, and Shi \cite{HuangMatsumuraShi2003} proved the time-asymptotic stability of the superposition of a boundary layer solution and a viscous shock by using anti-derivative techniques and shift arguments. Then Qin and Wang \cite{QinWang2009,QinWang2011} proved the existence of small-amplitude boundary layer solutions to the inflow problem of full compressible Navier-Stokes-Fourier equations in Lagrangian coordinates by the center-manifold approach and the time-asymptotic stability of composite waves including boundary layer solutions, rarefaction waves, and contact discontinuities in both subsonic and transonic scenarios. On the other hand, Nakamura and Nishibata \cite{NakamuraNishibata2011} established the existence and stability of the inflow boundary layer solution in Eulerian coordinates. Very recently, Han, Kang, Kim, Kim, and Oh \cite{HKKKO} proved the time-asymptotic stability of the superposition of the degenerate boundary layer solution, the rarefaction wave, and the viscous shock wave to the inflow barotropic compressible Navier-Stokes equations.

For the outflow problem, Kawashima, Nishibata, and Zhu \cite{KawashimaNishibataZhu2003} first established the existence and stability of boundary layer solutions for the one-dimensional barotropic  compressible Navier-Stokes system. Then Kagei and Kawashima \cite{KageiKawashima2006} investigated the multidimensional stability of such outflow boundary layer solutions; Nakamura, Nishibata, and Yuge \cite{NakamuraNishibataYuge2007} studied its large-time stability and convergence rates in one-dimensional half-line; Kawashima, Nakamura, Nishibata, and Zhu \cite{KawashimaNakamuraNishibataZhu2010} further proved the existence of small-amplitude outflow boundary layers by the center-manifold theory and
analyzed their time-asymptotic stability for the full compressible Navier-Stokes-Fourier system; and then Qin \cite{Qin2011} and Wan, Wang, and Zou \cite{WanWangZou2016} proved the stability of outflow boundary layers under large initial perturbations, while requiring the small boundary layer amplitude. Additionally, Chen, Hong, and Shi \cite{ChenHongShi2019,ChenHongShi2021} generalized the existence and stability analysis for the small-amplitude outflow boundary layer solutions of the viscous fluids for the ideal polytropic gas to more general equations of state.

For the spectral stability of boundary layer solutions to the viscous conservation laws including the Navier-Stokes equations \eqref{eq:1.1}, one can refer to the recent progress by Costanzino, Humpherys, Nguyen, and
Zumbrun \cite{CHNZ}, Nguyen, and Zumbrun \cite{NK} and Gu\`{e}s, M\'{e}tivier, Williams, and Zumbrun \cite{GMWZ} and the references therein. 

As outlined above, the boundary layer solutions to the half-space inflow/outflow problem are constructed via the stationary solutions of the governing equations. For the barotropic compressible Navier–Stokes equations, direct integration reduces the stationary boundary layer system to one algebraic relation for the mass (density) equation and a single scalar first-order ODE for the momentum (velocity) equation. This fully decoupled scalar ODE structure enables a rigorous existence of boundary layer solutions with arbitrary amplitude.

In sharp contrast, for the full compressible Navier–Stokes–Fourier system \eqref{eq:1.1}, the mass (density) equation still reduces to an algebraic relation upon integration, exactly as in the barotropic setting. However, the momentum (velocity) and energy (temperature) equations remain  strongly coupled, and their direct integration yields a first-order planar autonomous ordinary differential system. Crucially, standard center-manifold theory only guarantees the existence of boundary layer solutions to this planar system in a sufficiently small neighborhood of the far-field equilibrium, thereby restricting the constructed profiles to a small-amplitude regime. To extend these local, small-amplitude results to the global, large-amplitude framework,
a delicate global phase-plane analysis of the resulting first-order planar autonomous ordinary differential system is indispensable.

Regarding the corresponding inflow problem, Wang, Yang, and Yu \cite{WangYangYu2025} recently established the necessary and sufficient conditions for the existence of large-amplitude boundary layer solutions via a global geometric phase-plane analysis, which was inspired in part by Gilbarg for the existence of viscous shock profiles \cite{Gilbarg1951}. Crucially, their work demonstrates that the local invariant manifolds can be globally continued, and that the set of physically admissible inflow data can be precisely characterized by a family of well-defined monotone phase-plane curves.

However, for the outflow problem \eqref{eq:1.1}–\eqref{eq:1.4}, the global geometry of the profile trajectories in the phase-plane differs substantially from that of the inflow problem, as these trajectories are no longer guaranteed to be monotone and may become unbounded. Moreover, the supersonic case introduces an additional finite equilibrium, with its stable separatrices delineating the boundary of the basin attracted to the prescribed far-field state. Thus, classifying the outflow profiles requires considerations well beyond the local type of the far-field equilibrium or a straightforward adaptation of the inflow argument.

The present paper aims to establish the necessary and sufficient conditions for the existence of large-amplitude outflow boundary layer solutions associated with \eqref{eq:1.1}–\eqref{eq:1.4} by a global phase-plane analysis.
Our analysis hinges on the signs of two key quantities:   $M_+-1$ (with $M_+$ denoting
the Mach number at the right end state) and the Prandtl-number-related parameter $\zeta$. The far-field Mach number $M_+$, defined as
\begin{equation}
\Mplus:=\dfrac{|u_+|}{\sqrt{R\gamma\theta_+}}>0,
\label{eq:1.5}
\end{equation}
classifies the right far field into subsonic, transonic, or supersonic cases depending on the sign of $M_+-1$.

Our classification reveals several distinctive features absent from the inflow problem. In the subsonic case ($M_+-1<0$), the admissible boundary states form two distinct curves emanating from the far-field equilibrium. In the transonic case ($M_+-1=0$), these states comprise two boundary curves along with an open region bounded by them and the phase-plane boundary $\theta=0$.  In the supersonic case ($M_+-1>0$), the aforementioned auxiliary saddle equilibrium possesses two stable separatrices; these curves delineate an open basin of attraction, the interior of which consists of boundary layer trajectories that asymptotically approach the prescribed far-field state. The separatrices themselves tend toward the auxiliary saddle equilibrium instead, and thus do not constitute admissible outflow data for the prescribed far-field state. The resulting boundary layer solutions, while potentially non-monotone, are shown to be piecewise monotone and confined entirely within the physical region.

Another key quantity in the global continuation argument is the parameter
\begin{align}
	\zeta:=\dfrac{2}{\mu} - \dfrac{R}{\kappa(\gamma-1)} + 2\sqrt{\dfrac{R}{\kappa\mu}},
	\label{eq:2.2}
\end{align}
which stems from the comparison estimates used to track how a characteristic curve continues away from an equilibrium.
In terms of the Prandtl number
\[
\Pr=\frac{\mu c_p}{\kappa},
\qquad
c_p=\frac{\gamma R}{\gamma-1},
\]
one has
\begin{equation}\label{Prandtl-related}
\mu\zeta
=2-\frac{\Pr}{\gamma}
+2\sqrt{\frac{(\gamma-1)\Pr}{\gamma}}.
\end{equation}
Thus, for a fixed adiabatic exponent $\gamma$, the sign of $\zeta$ reflects the relative strength of momentum and thermal diffusions, satisfying
\begin{equation}\label{sign-z}
\zeta\geq0
\quad\Longleftrightarrow\quad
0<\Pr\leq 2\gamma\bigl(\gamma+\sqrt{\gamma^2-1}\bigr).
\end{equation}
This sign determines whether the relevant curve remains monotone or develops a turning point. By synthesizing this comparison argument with invariant regions, the classification of finite equilibria, and the global continuation of separatrices, we provide a complete characterization of the admissible boundary data in subsonic, transonic and supersonic cases, respectively.

The rest of the paper is organized as follows. In Section~2, we formulate the stationary outflow problem and state the main theorem. In Section~3, we reduce the stationary equations to a planar autonomous system and prove the main theorem by a global phase-plane analysis. Section~4 discusses the time-asymptotic stability problem for the large-amplitude boundary layer solutions and concludes the paper.

\section{Main Result}
\label{sec:main}

For the outflow problem \eqref{eq:1.1}-\eqref{eq:1.4}, the boundary layer solution $(\rho^{\mB},u^{\mB},\theta^{\mB})(x)$ satisfies
\begin{equation}
\begin{cases}
	(\rho^{\mB} u^{\mB})' = 0, \\[6pt]
	\bigl(\rho^{\mB} (u^{\mB})^2 + p^{\mB}\bigr)' = \mu (u^{\mB})'', \\[6pt]
	\left[\rho^{\mB} u^{\mB} \left(\dfrac{R\theta^{\mB}}{\gamma-1} + \dfrac{(u^{\mB})^2}{2}\right) + p^{\mB} u^{\mB}\right]' = \bigl(\kappa (\theta^{\mB})' + \mu u^{\mB} (u^{\mB})'\bigr)', \\[6pt]
	(u^{\mB}, \theta^{\mB})(0) = (u_-, \theta_-), \\[6pt]
	(\rho^{\mB}, u^{\mB}, \theta^{\mB})(+\infty) = (\rho_+, u_+, \theta_+),
\end{cases}
\quad'=\frac{d}{dx}, \quad x \in \mathbb{R}_+,
\label{eq:2.1}
\end{equation}
where $u_- < 0$, $\rho_+>0$, $\theta_{\pm}>0$, and $p^{\mB}=R\rho^{\mB}\theta^{\mB}$. The stationary boundary layer problem is therefore reduced to the existence of solutions to the ODE system \eqref{eq:2.1}. 

Our main result is stated as follows. 

\begin{theorem}
\label{thm:main}
For the outflow boundary layer problem \eqref{eq:2.1} with $\rho_+>0$, $\theta_\pm>0$, and $u_-<0$, we have
\begin{itemize}
	\item If $u_+\ge 0$, then there is no boundary layer solution to \eqref{eq:2.1}.
	
	\item If $u_+<0$, then the existence and non-existence of large boundary layer solutions to \eqref{eq:2.1} can be divided into the following three cases:
	
	\begin{enumerate}[label=(\alph*)]
		\item \textbf{Subsonic case} ($0<\Mplus<1$). There exist two distinct curves $\Gamma_1$ and $\Gamma_2$ (see Figure \ref{fig1}) in the phase $(u^{\mB},\theta^{\mB})$-plane such that the following holds.
		\begin{enumerate}[label=(\roman*)]
			\item The boundary layer solution $(\rho^{\mB},u^{\mB},\theta^{\mB})(x)$ to \eqref{eq:2.1} exists if and only if
				$(u_-,\theta_-)\in \Gamma_1\cup\Gamma_2$.
			\item $\Gamma_1$ is monotone in the $(u^{\mB},\theta^{\mB})$-plane, while $\Gamma_2$ is monotone when $\zeta\leq0$ and non-monotone when $\zeta>0$;
			\item Both $\Gamma_1$ and $\Gamma_2$ are tangent to the straight line
			\[
			\rho_+u_+^2(u^{\mB}-u_+)+\Mplus^2\gamma\kappa\left(\dfrac{R\rho_+u_+}{\kappa(\gamma-1)}-\lambda_2\right)(\theta^{\mB}-\theta_+)=0,
			\]
			at $(u_+,\theta_+)$, where $\lambda_2<0$ is the negative eigenvalue of the matrix $A$ (to be defined in \eqref{eq:3.15}).
			\item There exist constants $C_n>0$ and $c>0$ such that the boundary layer solution $(\rho^{\mB},u^{\mB},\theta^{\mB})(x)$ satisfies
			\begin{equation}
				\left|\dfrac{\mathrm{d}^n}{\mathrm{d}x^n}(\rho^{\mB}-\rho_+,u^{\mB}-u_+,\theta^{\mB}-\theta_+)\right|
				\le C_n\me^{-cx},
				\qquad n=0,1,2,\dots.
				\label{eq:2.3}
			\end{equation}
		\end{enumerate}
		
		\item \textbf{Transonic case} ($\Mplus=1$). There exist two distinct phase-plane curves $\Gamma_3$, $\Gamma_4$, and an open region $\Sigma_1$ bounded below by the phase-plane boundary $\theta^{\mB}=0$ and above by $\Gamma_3$ and $\Gamma_4$ (see Figure \ref{fig2}), such that the following holds.
		\begin{enumerate}[label=(\roman*)]
			\item $\Gamma_3$ is monotone in the $(u^{\mB},\theta^{\mB})$-plane, while $\Gamma_4$ is monotone when $\zeta\leq0$ and non-monotone when $\zeta>0$.
			\item Both $\Gamma_3$ and $\Gamma_4$ are tangent to the straight line
			\[
			\mu u_+(u^{\mB}-u_+)-\kappa(\gamma-1)(\theta^{\mB}-\theta_+)=0
			\]
	        at $(u_+,\theta_+)$.
	        \item The boundary layer solution $(\rho^{\mB}, u^{\mB}, \theta^{\mB})(x)$ to \eqref{eq:2.1} exists if and only if
	        $(u_-,\theta_-)\in \Sigma_1\cup\Gamma_3\cup\Gamma_4$. Moreover, the boundary layer solution $(\rho^{\mB}, u^{\mB}, \theta^{\mB})(x)$ is piecewise monotone.
			\item There exist constants $C_n>0$ such that the boundary layer solution $(\rho^{\mB}, u^{\mB}, \theta^{\mB})(x)$ satisfies
			\begin{equation}
				\left|\dfrac{\mathrm{d}^n}{\mathrm{d}x^n}(\rho^{\mB}-\rho_+,u^{\mB}-u_+,\theta^{\mB}-\theta_+)\right|
				\le \dfrac{C_n}{(1+x)^{n+1}},
				\qquad n=0,1,2,\dots.
				\label{eq:2.4}
			\end{equation}
		\end{enumerate}
		
		\item \textbf{Supersonic case} ($\Mplus>1$). There exist two distinct phase-plane curves $\Gamma_5$, $\Gamma_6$, and an open region $\Sigma_2$ bounded below by the phase-plane boundary $\theta^{\mB}=0$ and above by $\Gamma_5$ and $\Gamma_6$ (see Figure \ref{fig3}), such that the following holds.
		\begin{enumerate}[label=(\roman*)]
			
			\item $\Gamma_5$ is monotone in the $(u^{\mB},\theta^{\mB})$-plane, while $\Gamma_6$ is monotone when $\zeta\leq0$ and non-monotone when $\zeta>0$.
			\item Both $\Gamma_5$ and $\Gamma_6$ are tangent to the straight line
			\[
			\begin{aligned}
			&\left(\dfrac{(\Mplus^2\gamma^2-3\Mplus^2\gamma+3\gamma-1)\rho_+u_+}{(\Mplus^2(\gamma-1)+2)\mu\gamma}-\lambda_4\right)
			\bigl(u^{\mB}-(1+\alpha_1)u_+\bigr)\\
			&\quad +\dfrac{R\rho_+\Mplus^2(\gamma+1)}{(\Mplus^2(\gamma-1)+2)\mu}
			\bigl(\theta^{\mB}-(1+\alpha_2)\theta_+\bigr)=0
			\end{aligned}
			\]
			 at the point $S_*((1+\alpha_1)u_+,(1+\alpha_2)\theta_+)$, where $\alpha_1,\alpha_2$ are given in \eqref{eq:3.38} and $\lambda_4<0$ is the negative eigenvalue of $\widetilde A$ (to be defined in \eqref{eq:3.39}).
			 \item The boundary layer solution $(\rho^{\mB}, u^{\mB}, \theta^{\mB})(x)$ to \eqref{eq:2.1} exists if and only if
			 $(u_-,\theta_-)\in \Sigma_2\setminus\{(u_+,\theta_+)\}$. Moreover, the boundary layer solution $(\rho^{\mB}, u^{\mB}, \theta^{\mB})(x)$ is piecewise monotone.
			\item There exist constants $C_n>0$ and $c>0$ such that the boundary layer solution $(\rho^{\mB}, u^{\mB}, \theta^{\mB})(x)$ satisfies
			\begin{equation}
				\left|\dfrac{\mathrm{d}^n}{\mathrm{d}x^n}(\rho^{\mB}-\rho_+,u^{\mB}-u_+,\theta^{\mB}-\theta_+)\right|
				\le C_n\me^{-cx},
				\qquad n=0,1,2,\dots.
				\label{eq:2.5}
			\end{equation}
		\end{enumerate}
	\end{enumerate}
\end{itemize}
\end{theorem}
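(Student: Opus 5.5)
We reduce \eqref{eq:2.1} to a planar autonomous system and then study it by a global phase-plane analysis. Integrating the mass equation gives $\rho^{\mB}u^{\mB}=\rho_+u_+=:m$. If $u_+=0$, then $m=0$, which is incompatible with $u_-<0$ and $\rho^{\mB}>0$. If $u_+>0$, then $m>0$ forces $u^{\mB}>0$ everywhere, contradicting $u_-<0$. This settles the first bullet.

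\textbf{Reduction.} For $u_+<0$, integrate the momentum and energy equations from $x$ to $+\infty$, using the decay of the derivatives. Writing $\rho=m/u$ and $p=Rm\theta/u$, this yields
\begin{align*}
\mu u'&=m(u-u_+)+Rm\Big(\frac{\theta}{u}-\frac{\theta_+}{u_+}\Big)=:F(u,\theta),\\
\kappa\theta'&=m\Big[\frac{R}{\gamma-1}(\theta-\theta_+)-\frac{(u-u_+)^2}{2}\Big]+Rm\theta_+\Big(1-\frac{u}{u_+}\Big)\cdot(\ldots)=:G(u,\theta).
\end{align*}
Here the exact form of $G$ comes from subtracting $u$ times the momentum relation. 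The system is posed on the physical region $\{u<0,\ \theta>0\}$. A boundary layer solution is exactly a forward orbit of this system starting at $(u_-,\theta_-)$ and converging to $P_+=(u_+,\theta_+)$ as $x\to+\infty$.

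The equilibria are found by solving $F=G=0$. Besides $P_+$, eliminating $\theta$ gives a quadratic in $u$ with a second root $S_*=((1+\alpha_1)u_+,(1+\alpha_2)\theta_+)$. This second root lies in the physical region iff $M_+>1$, and it coincides with $P_+$ iff $M_+=1$.

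\textbf{Local analysis.} We compute $\det A$ and $\operatorname{tr}A$ for the Jacobian $A$ at $P_+$; since $m<0$ we track the signs carefully. For $M_+<1$, $P_+$ is a saddle. Its stable manifold consists of two branches $\Gamma_1,\Gamma_2$, tangent to the $\lambda_2$-eigenvector, which gives the stated line. For $M_+>1$, $P_+$ is an attracting node with an open basin, while $S_*$ is a saddle with Jacobian $\widetilde A$. The two stable separatrices $\Gamma_5,\Gamma_6$ of $S_*$ are tangent to the $\lambda_4$-eigenvector. For $M_+=1$, $P_+$ is degenerate with one zero eigenvalue. By center-manifold reduction it is a saddle-node: there is a parabolic sector of attraction, bounded by two orbits $\Gamma_3,\Gamma_4$ tangent to the center direction $\mu u_+(u-u_+)=\kappa(\gamma-1)(\theta-\theta_+)$. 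The algebraic decay \eqref{eq:2.4} comes from the center dynamics $\dot z\sim -cz^2$, and the exponential decay in cases (a) and (c) comes from hyperbolicity. Higher derivative bounds follow by differentiating the ODE.

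\textbf{Global continuation.} We follow the separatrices backward in $x$ and show that they stay in the physical region, are graphs or piecewise monotone, and exit through $\theta=0$ or to infinity rather than spiralling or returning. The tool is the nullclines $F=0$ (a curve $\theta=\theta(u)$) and $G=0$ (a parabola-like curve), which split the quarter plane into regions of fixed sign of $(u',\theta')$. Trajectories can cross each nullcline only in one direction, so these regions are invariant in backward or forward time.

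Monotonicity of $\Gamma_1,\Gamma_3,\Gamma_5$ follows because each lies in a region where $F$ and $G$ have constant sign. For $\Gamma_2,\Gamma_4,\Gamma_6$ we compare $d\theta/du=\mu G/(\kappa F)$ against linear barrier curves through the equilibrium. The quadratic in the slope that decides whether a line is crossed has discriminant governed by $\zeta$. When $\zeta\le0$ the barrier keeps the branch monotone. When $\zeta>0$ the branch must cross the $G=0$ nullcline and hence turn, which produces the non-monotonicity.

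\textbf{Basins and the main obstacle.} In case (c), $\Gamma_5\cup\Gamma_6\cup\{S_*\}$ together with $\{\theta=0\}$ bound $\Sigma_2$. Every orbit starting inside $\Sigma_2$ stays there and, having no other equilibrium and no periodic orbit, converges to $P_+$. To exclude periodic orbits we use Bendixson–Dulac with the weight suggested by the entropy production, or simply the sign structure of the nullcline regions. Orbits outside $\Sigma_2$ blow up or reach $\theta=0$. The same argument gives the basin $\Sigma_1$ in case (b). In case (a), only the stable manifold converges to $P_+$, since it is a saddle. I expect the main obstacle to be this global step: proving that the separatrices do not escape to infinity or hit $\theta=0$ prematurely, and that the basin is exactly $\Sigma_i$. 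This requires sharp barrier curves near $u\to0^-$, where the vector field is singular, and it is the point where the $\zeta$ comparison estimate is decisive.
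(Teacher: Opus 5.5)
\textbf{The step you flag as the main obstacle fails as proposed.} Your architecture matches the paper's: the flux argument for $u_+\ge0$, reduction to a planar system, local type of the equilibria, nullcline sign structure, and a basin bounded by separatrices and $\theta=0$. But the step deciding the fate of $\Gamma_2,\Gamma_4,\Gamma_6$ rests on a mechanism that cannot work.

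In shifted variables $\bar u=u-u_+$, $\bar\theta=\theta-\theta_+$, these branches are continued backward into $\{\bar u<0,\ \bar\theta>\max(h_1,h_2)\}$, where they move left and up. The question is whether they ever meet the $G=0$ nullcline $\bar\theta=h_2(\bar u)\sim\frac{\gamma-1}{2R}\bar u^2$ as $\bar u\to-\infty$.
\begin{itemize}
\item Any barrier certifying that they never do must grow at least quadratically, so straight lines through the equilibrium are useless. On a ray $\bar\theta=k\bar u$ with $k<0$ one is eventually below $h_2$. There the orbit slope behaves like $-\frac{\mu}{2\kappa}\bar u\to+\infty$, so backward orbits cross the ray downward.
\item Anything built from the linearization at the equilibrium depends on $M_+$ and $u_+$, whereas $\zeta$ depends only on $\mu,\kappa,R,\gamma$. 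So $\zeta$ is a far-field quantity arising as $u\to-\infty$, not near the singular line $u\to0^-$ where you locate the difficulty.
\end{itemize}

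\textbf{The missing idea is the parabolic scaling.} Set $\eta=\bar u^2$ and $\alpha=\bar\theta/\bar u^2$. Throughout that region one has the one-sided comparison $\frac{d\alpha}{d\eta}>\frac{f(\alpha)-\alpha}{\eta}$, with $f(q)=\bigl(\frac{R}{\kappa(\gamma-1)}q-\frac{1}{2\kappa}\bigr)/\bigl(\frac{2R}{\mu}q+\frac{2}{\mu}\bigr)$. The equation $f(q)=q$ has a positive root iff $\zeta\le0$.
\begin{itemize}
\item If $\zeta\le0$, a root $\beta$ gives the barrier $\bar\theta\ge\beta\bar u^2>h_2(\bar u)$, hence monotonicity.
\item If $\zeta>0$, the mere absence of a barrier proves nothing. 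You need $f(q)-q<-\varepsilon_0$ for $q\ge0$, together with $\frac{d\bar\theta}{d\eta}=f(\alpha)+o(1)$ as $\bar u\to-\infty$. This gives $\frac{d\alpha}{d\eta}<-\frac{\varepsilon_0}{2\eta}$, so $\alpha\to-\infty$ logarithmically, contradicting that the branch stays in the region.
\end{itemize}
The dichotomy is complete only after excluding three other continuations:
\begin{itemize}
\item a vertical asymptote, since the slope would tend to the finite value $\mu(C+u_+)/(\kappa(\gamma-1))$;
\item a horizontal asymptote after the turn, since the slope behaves like $-\frac{\mu}{2\kappa}\bar u$;
\item a first exit through the $F=0$ nullcline after the turn, impossible because $h_1<-\theta_+$ for $\bar u\le 2u_+/(M_+^2\gamma)$.
\end{itemize}
None of this appears in your proposal.

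\textbf{Two smaller points.}
\begin{itemize}
\item $S_*$ is physical not only for $M_+>1$. One has $(1+\alpha_1)u_+<0$ always and $1+\alpha_2=\frac{(M_+^2(\gamma-1)+2)(2\gamma M_+^2-\gamma+1)}{M_+^2(\gamma+1)^2}$. So for $\frac{\gamma-1}{2\gamma}<M_+^2<1$ it lies in $\{u<0,\ \theta>0\}$. There $\det\widetilde A>0>\operatorname{tr}\widetilde A$, so it is attracting and cannot be a backward limit of $\Gamma_1,\Gamma_2$. It is harmless, but your continuation argument must say so.
\item In the transonic case, the line $\mu u_+(u-u_+)=\kappa(\gamma-1)(\theta-\theta_+)$ is the eigendirection of the nonzero eigenvalue $\lambda_3$, not the center direction. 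The center direction has slope $-(\gamma-1)u_+/(R\gamma)$ and is the common tangent of the two nullclines. $\Gamma_3,\Gamma_4$ are the two branches of the strong stable manifold. It is the orbits inside the parabolic sector that enter along the center direction, with algebraic decay. Your formula is right, but the geometry is inverted.
\end{itemize}
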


\begin{remark}
 Theorem~\ref{thm:main} classifies all the admissible outflow boundary data with large amplitude for the boundary layer problem \eqref{eq:2.1} in the subsonic, transonic and supersonic cases respectively.
\end{remark}



\begin{remark}
    To demonstrate the scope of $\zeta$ in most physically relevant cases, Table~\ref{tab:physical-zeta} lists six common gases at $300{\rm K}$ and at $1 {\rm atm}$ with all physical parameters expressed in SI units \cite{WangFeng1982}. Direct substitution of these values into \eqref{eq:2.2} yields $\zeta>0$ for all six gases. Therefore, for these gases, the cases in Theorem~\ref{thm:main} with $\zeta>0$ represent the most physically relevant ones.
	
\end{remark}

\begin{table}[htbp]
\centering
\scriptsize
\setlength{\tabcolsep}{3.2pt}
\renewcommand{\arraystretch}{1.05}
\caption{Physical parameters and the corresponding values of $\zeta$ at $300\,\mathrm{K}$ and $1\,\mathrm{atm}$ in SI units.}
\label{tab:physical-zeta}
\begin{tabular}{lccccc}
\toprule
Gas & $\mu$ & $\kappa$ & $R$ & $\gamma$ & $\zeta$ \\
 & $(\mathrm{Pa}\cdot\mathrm{s})$ & $(\mathrm{W}\,\mathrm{m}^{-1}\,\mathrm{K}^{-1})$ & $(\mathrm{J}\,\mathrm{kg}^{-1}\,\mathrm{K}^{-1})$ & & \\
\midrule
Air & $1.85\times10^{-5}$ & $2.62\times10^{-2}$ & $287.0$ & $1.40$ & $1.294\times10^{5}$ \\
Nitrogen ($\mathrm{N}_2$) & $1.77\times10^{-5}$ & $2.59\times10^{-2}$ & $296.8$ & $1.40$ & $1.352\times10^{5}$ \\
Oxygen ($\mathrm{O}_2$) & $2.07\times10^{-5}$ & $2.66\times10^{-2}$ & $259.8$ & $1.40$ & $1.156\times10^{5}$ \\
Carbon dioxide ($\mathrm{CO}_2$) & $1.50\times10^{-5}$ & $1.66\times10^{-2}$ & $188.9$ & $1.29$ & $1.492\times10^{5}$ \\
Hydrogen ($\mathrm{H}_2$) & $8.90\times10^{-6}$ & $1.869\times10^{-1}$ & $4124.0$ & $1.41$ & $2.705\times10^{5}$ \\
Methane ($\mathrm{CH}_4$) & $1.11\times10^{-5}$ & $3.43\times10^{-2}$ & $518.3$ & $1.31$ & $2.052\times10^{5}$ \\
\bottomrule
\end{tabular}
\end{table}

\newpage

\begin{figure}[htbp]
	\centering
	\begin{subfigure}{0.4\textwidth}
		\centering
		\includegraphics[width=\linewidth]{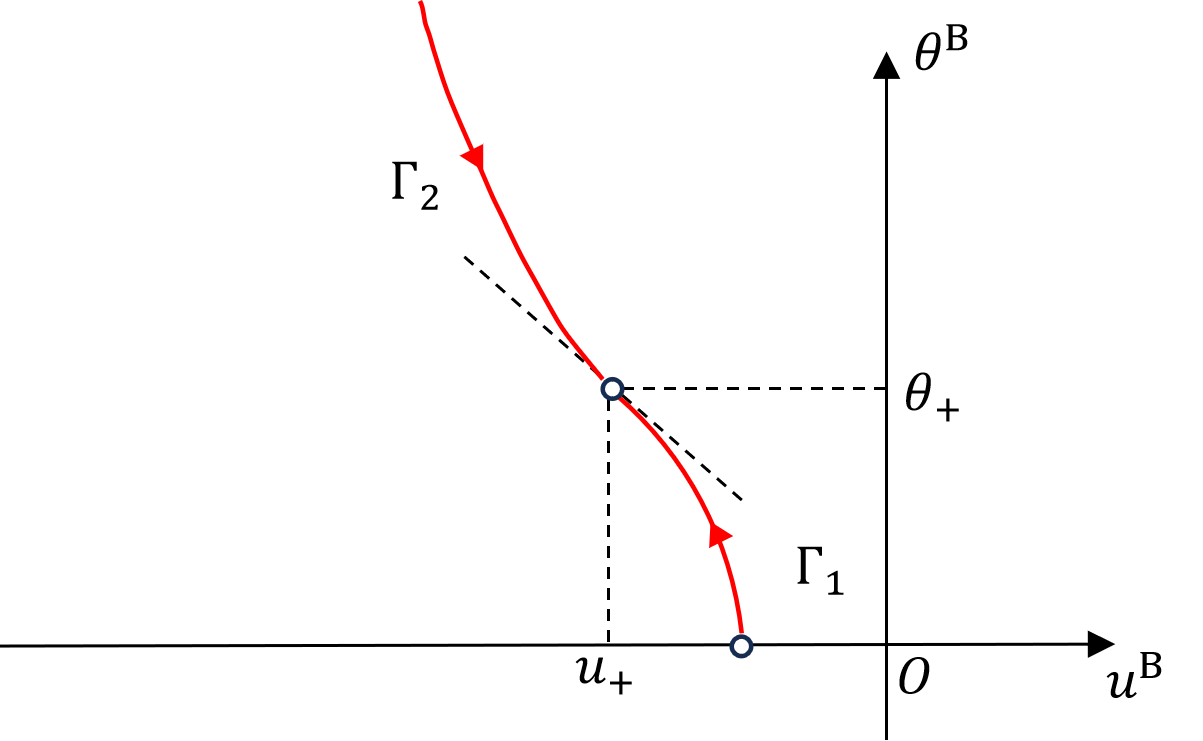}
		\caption{$\zeta\leq0$}
		\label{fig1(1)}
	\end{subfigure}
	\hfill 
	\begin{subfigure}{0.4\textwidth}
		\centering
		\includegraphics[width=\linewidth]{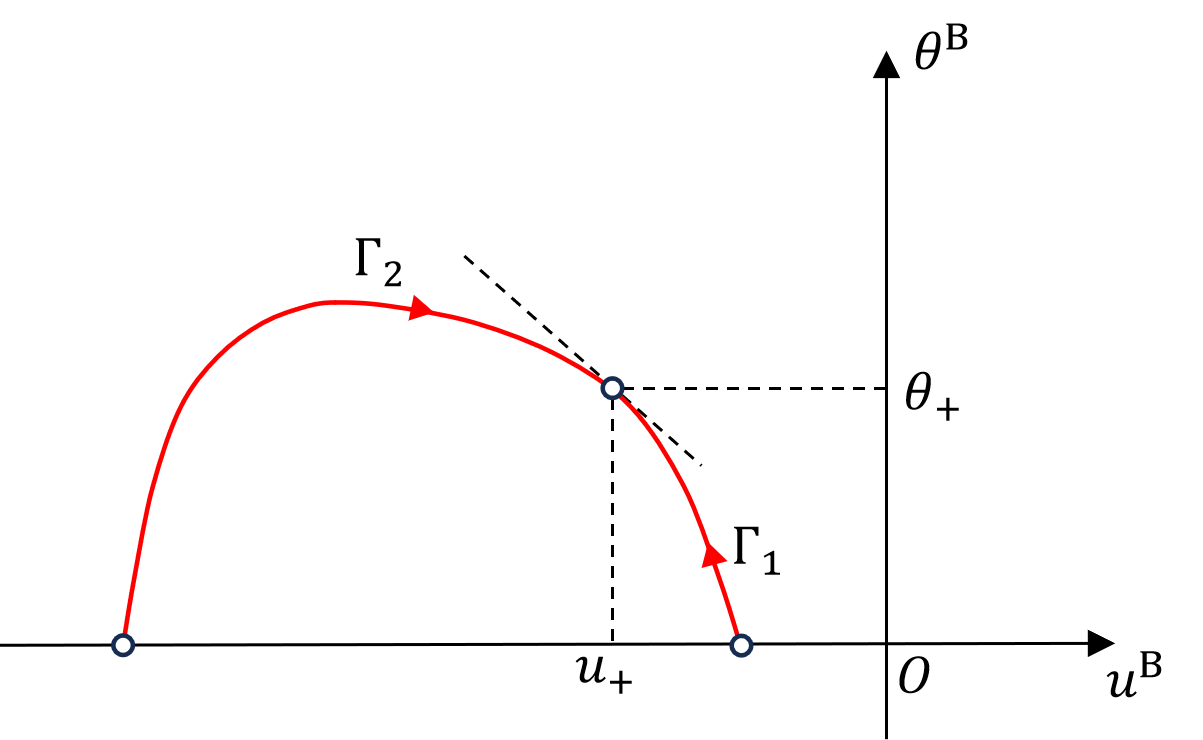}
		\caption{$\zeta>0$}
		\label{fig1(2)}
	\end{subfigure}
	\caption{Subsonic case: $0<\Mplus<1$. Admissible boundary data set: $\Gamma_1\cup\Gamma_2$.}
	\label{fig1}
\end{figure}

\begin{figure}[H]
	\centering
	\begin{subfigure}{0.4\textwidth}
		\centering
		\includegraphics[width=\linewidth]{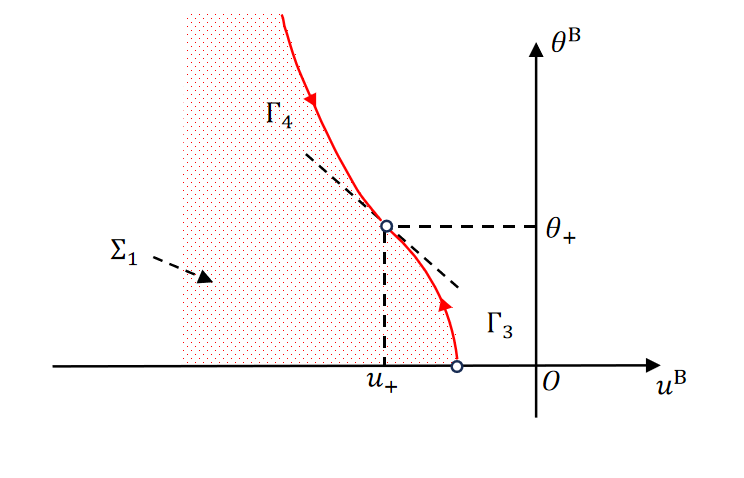}
		\caption{$\zeta\leq0$}
		\label{fig2(1)}
	\end{subfigure}
	\hfill 
	\begin{subfigure}{0.4\textwidth}
		\centering
		\includegraphics[width=\linewidth]{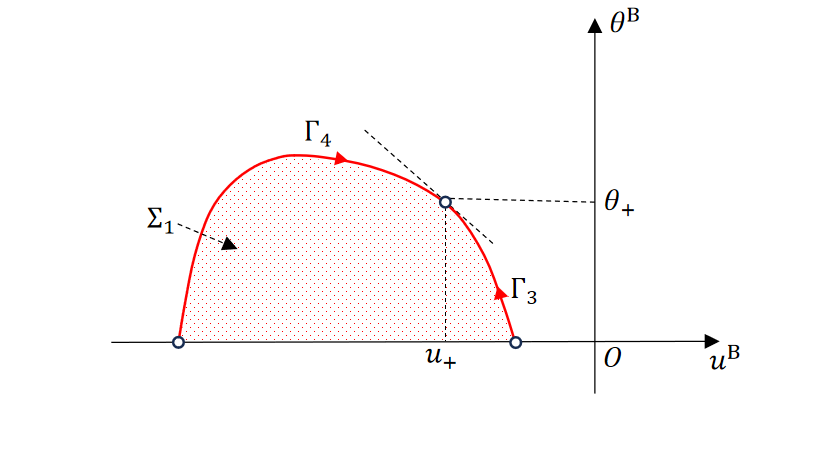}
		\caption{$\zeta>0$}
		\label{fig2(2)}
	\end{subfigure}
	\caption{Transonic case: $\Mplus=1$. Admissible boundary data set: $\Sigma_1\cup\Gamma_3\cup\Gamma_4$.}
	\label{fig2}
\end{figure}

\begin{figure}[H]
	\centering
	\begin{subfigure}{0.4\textwidth}
		\centering
		\includegraphics[width=\linewidth]{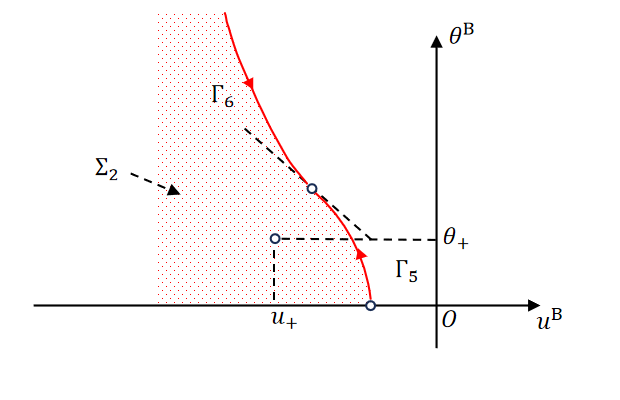}
		\caption{$\zeta\leq0$}
		\label{fig3(1)}
	\end{subfigure}
	\hfill 
	\begin{subfigure}{0.4\textwidth}
		\centering
		\includegraphics[width=\linewidth]{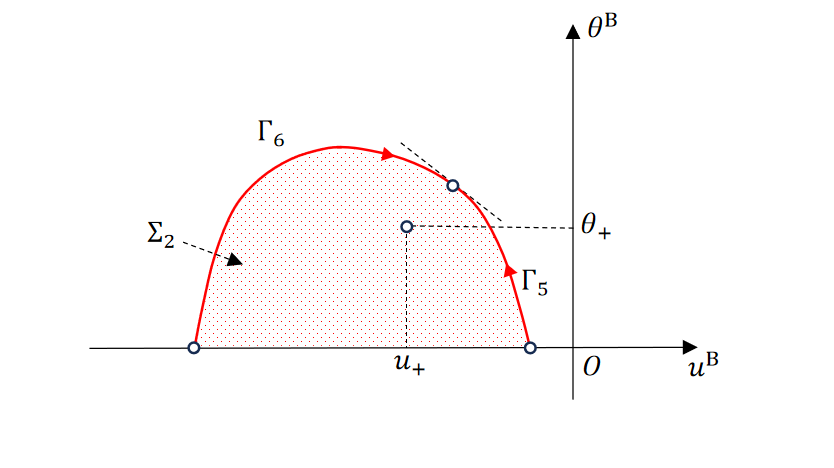}
		\caption{$\zeta>0$}
		\label{fig3(2)}
	\end{subfigure}
	\caption{Supersonic case: $\Mplus>1$. Admissible boundary data set: $\Sigma_2\setminus\{(u_+,\theta_+)\}$.}
	\label{fig3}
\end{figure}

\section{Proof of the Main Result}
\subsection{Reduction to a planar autonomous system}

Integrating the stationary boundary layer equations \eqref{eq:2.1} over $[x,+\infty)$ and using the far-field condition at $x=+\infty$, we obtain the first integrals
\begin{equation}
\begin{cases}
\rho^{\mB} u^{\mB}=\rho_+u_+,\\[1mm]
\rho^{\mB} (u^{\mB})^2+R\rho^{\mB}\theta^{\mB}-\mu (u^{\mB})'=\rho_+u_+^2+R\rho_+\theta_+,\\[1mm]
\rho^{\mB} u^{\mB}\left(\dfrac{R\theta^{\mB}}{\gamma-1}+\dfrac{(u^{\mB})^2}{2}\right)+R\rho^{\mB}\theta^{\mB} u^{\mB}
-\kappa(\theta^{\mB})'-\mu u^{\mB}(u^{\mB})' \\
\qquad =\rho_+u_+\left(\dfrac{R\theta_+}{\gamma-1}+\dfrac{u_+^2}{2}\right)+R\rho_+\theta_+u_+ ,\\[6pt]
(u^{\mB}, \theta^{\mB})(0) = (u_-, \theta_-), \\
(\rho^{\mB}, u^{\mB}, \theta^{\mB})(+\infty) = (\rho_+, u_+, \theta_+).
\end{cases}
\label{eq:3.4}
\end{equation}
The first identity in \eqref{eq:3.4} is the conservation of the stationary mass flux. Since we are dealing with the outflow boundary condition $u_-<0$, a physically admissible boundary layer solution must satisfy $u^{\mB}(x)<0$ and $\rho^{\mB}(x)>0$ for all $x\ge0$. Therefore the constant flux in \eqref{eq:3.4} must be negative. If $u_+\ge0$, this is impossible; hence no outflow boundary layer solution exists in this case. In the rest of the proof we always assume $u_+<0$.

By the first identity in \eqref{eq:3.4}, the density is recovered from $u^{\mB}$ as
\begin{equation}
\rho^{\mB}(x)=\dfrac{\rho_+u_+}{u^{\mB}(x)}.
\label{eq:3.5}
\end{equation} 
Thus $\rho^{\mB}(0)=\dfrac{\rho_+u_+}{u^{\mB}(0)}>0$. Set
\begin{equation}
	\bar{u}^{\mB}:=u^{\mB}-u_+,
	\qquad
	\bar\theta^{\mB}:=\theta^{\mB}-\theta_+ .
	\label{eq:3.1}
\end{equation}
Then the far-field state becomes the origin $O(0,0)$ in the $(\bar{u}^{\mB},\bar\theta^{\mB})$-plane. Curves and regions in this shifted plane will be denoted by primes, for instance $\Gamma_i'$ and $\Sigma_i'$, while their translations in the original $(u^{\mB},\theta^{\mB})$-plane are denoted by $\Gamma_i$ and $\Sigma_i$. The physical constraints are
\begin{equation}
	\bar\theta^{\mB}> -\theta_+,
	\qquad
	\bar{u}^{\mB}<-u_+,
	\label{eq:3.2}
\end{equation}
where the first inequality is the positivity of the absolute temperature and the second follows from $u^{\mB}<0$ in the outflow case. We denote the physical lower boundary by
\begin{equation}
	L_0:=\{(\bar{u}^{\mB},\bar\theta^{\mB}):\bar\theta^{\mB}=-\theta_+\}.
	\label{eq:3.3}
\end{equation}
The reduced vector field is singular on the straight line
\begin{equation}
\mathcal S:=\{(\bar{u}^{\mB},\bar\theta^{\mB}):\bar{u}^{\mB}=-u_+\},
\label{eq:3.6}
\end{equation}
because the density formula \eqref{eq:3.5} blows up there. This line is not an equilibrium curve; it is the boundary of the algebraically reduced phase plane. Consequently all phase-plane solution curves used below are considered in the open physical domain \eqref{eq:3.2}, and a solution curve ceases to be physically admissible once it reaches either $\mathcal S$ or the lower thermodynamic boundary $L_0$.

Substituting \eqref{eq:3.5} into the last two identities in \eqref{eq:3.4}, and using
\[
R\theta_+=\dfrac{u_+^2}{\Mplus^2\gamma},
\]
we obtain a system for $(\bar{u}^{\mB},\bar\theta^{\mB})$:
\begin{equation}
\begin{cases}
(\bar{u}^{\mB})'=H_1(\bar{u}^{\mB},\bar\theta^{\mB}),\\[1mm]
(\bar\theta^{\mB})'=H_2(\bar{u}^{\mB},\bar\theta^{\mB}),
\end{cases}
\qquad
(\bar{u}^{\mB},\bar\theta^{\mB})(+\infty)=(0,0),
\label{eq:3.7}
\end{equation}
where
\begin{align}
H_1(\bar{u}^{\mB},\bar\theta^{\mB})
&:=\dfrac{\rho_+u_+}{\mu}
\left(
\bar{u}^{\mB}+\dfrac{R(\bar\theta^{\mB}+\theta_+)}{\bar{u}^{\mB}+u_+}-\dfrac{u_+}{\Mplus^2\gamma}
\right),
\label{eq:3.8}\\
H_2(\bar{u}^{\mB},\bar\theta^{\mB})
&:=\dfrac{\rho_+u_+^2}{\Mplus^2\gamma\kappa}\bar{u}^{\mB}
+\dfrac{R\rho_+u_+}{\kappa(\gamma-1)}\bar\theta^{\mB}
-\dfrac{\rho_+u_+}{2\kappa}(\bar{u}^{\mB})^2 .
\label{eq:3.9}
\end{align}
Conversely, any non-trivial solution of \eqref{eq:3.7} satisfying the physical constraint \eqref{eq:3.2} defines an outflow boundary layer solution of \eqref{eq:2.1} through \eqref{eq:3.5}. Hence the existence problem for \eqref{eq:2.1} is equivalent to the existence of a physically admissible solution curve of \eqref{eq:3.7} approaching the origin as $x\to+\infty$. 

We next identify the finite equilibria of the planar vector field away from the singular line \eqref{eq:3.6}. Solving $H_1=H_2=0$ we obtain two equilibria
\begin{equation}
O(0,0),
\qquad
P(\alpha_1u_+,\alpha_2\theta_+),
\label{eq:3.10}
\end{equation}
where
\begin{equation}
\alpha_1:=\dfrac{2(1-\Mplus^2)}{\Mplus^2(\gamma+1)},
\qquad
\alpha_2:=\dfrac{2(\Mplus^2-1)(\Mplus^2\gamma+1)(\gamma-1)}{\Mplus^2(\gamma+1)^2}.
\label{eq:3.11}
\end{equation}
In the original $(u^{\mB},\theta^{\mB})$-plane, these equilibria are
\begin{equation}
S_+=(u_+,\theta_+),
\qquad
S_*=\bigl((1+\alpha_1)u_+,(1+\alpha_2)\theta_+\bigr).
\label{eq:3.13}
\end{equation}
When $\Mplus=1$, one has $\alpha_1=\alpha_2=0$, and therefore $S_*$ coincides with $S_+$. Since the boundary layer solution curve is required to satisfy $(\bar{u}^{\mB},\bar\theta^{\mB})(+\infty)=(0,0)$, the local type of $O$ and the global continuation of the invariant-manifold curves issuing from the stable eigendirections will determine the admissible boundary data.

The linearization of \eqref{eq:3.7} at $O$ is
\begin{equation*}
\binom{(\bar{u}^{\mB})'}{(\bar\theta^{\mB})'}
=A\binom{\bar{u}^{\mB}}{\bar\theta^{\mB}}+O\left(|(\bar{u}^{\mB},\bar\theta^{\mB})|^2\right),
\label{eq:3.14}
\end{equation*}
where
\begin{align}
A=
\begin{pmatrix}
\dfrac{(\Mplus^2\gamma-1)\rho_+u_+}{\Mplus^2\gamma\mu} & \dfrac{R\rho_+}{\mu}\\[2mm]
\dfrac{\rho_+u_+^2}{\Mplus^2\gamma\kappa} & \dfrac{R\rho_+u_+}{\kappa(\gamma-1)}
\end{pmatrix}.
\end{align}
A direct calculation gives
\begin{align}
\det A=\dfrac{R\rho_+^2u_+^2(\Mplus^2-1)}{\Mplus^2\mu\kappa(\gamma-1)},
\qquad
\mathrm{tr} A=\rho_+u_+\left(\dfrac{M_+^2\gamma-1}{M_+^2\gamma\mu}+\dfrac{R}{\kappa(\gamma-1)}\right).
\label{eq:3.15}
\end{align}
Therefore the origin is a saddle for $0<\Mplus<1$, a degenerate equilibrium for $\Mplus=1$, and a stable node for $\Mplus>1$.

Let
\begin{equation}
h_1(\bar{u}^{\mB}):=-\dfrac{1}{R}\bar{u}^{\mB}\left(\bar{u}^{\mB}+\dfrac{(\Mplus^2\gamma-1)u_+}{\Mplus^2\gamma}\right),
\qquad
h_2(\bar{u}^{\mB}):=\dfrac{\gamma-1}{2R}\bar{u}^{\mB}\left(\bar{u}^{\mB}-\dfrac{2u_+}{\Mplus^2\gamma}\right).
\label{eq:3.16}
\end{equation}
Here $\bar\theta^{\mB}=h_1(\bar{u}^{\mB})$ defines the nullcline 
 along which $(\bar{u}^{\mB})'=0$, while $\bar\theta^{\mB}=h_2(\bar{u}^{\mB})$ defines the nullcline for which $(\bar\theta^{\mB})'=0$. Let $L_1$ and $L_2$ be the graphs of $h_1$ and $h_2$. The direction of the vector field on these curves is the main tool in the global continuation argument below.

\subsection{Subsonic case: $0<\Mplus<1$}

When $0<\Mplus<1$, \eqref{eq:3.15} gives $\det A<0$. Hence $O$ is a saddle. Let $\lambda_1>0>\lambda_2$ be the two eigenvalues of $A$. The stable eigenvector corresponding to $\lambda_2$ has negative slope. More precisely, an eigenvector associated with $\lambda_i$ can be chosen as
\begin{equation*}
\left(1,-\dfrac{\rho_+u_+^2}{\Mplus^2\gamma\kappa}\left(\dfrac{R\rho_+u_+}{\kappa(\gamma-1)}-\lambda_i\right)^{-1}\right),
\qquad i=1,2,
\label{eq:3.17}
\end{equation*}
and the sign relation
\begin{equation*}
\operatorname{sgn}\left(\dfrac{(\Mplus^2\gamma-1)\rho_+u_+}{\Mplus^2\gamma\mu}-\lambda_2\right)
=\operatorname{sgn}(\lambda_1-\lambda_2)>0
\label{eq:3.18}
\end{equation*}
shows that the stable direction has negative slope. Therefore, the one-dimensional stable manifold of $O$ has two branches, approaching $O$ from the fourth and second quadrants, respectively. In particular, they are tangent at $O$ to the straight line
\begin{equation}
\tau_1':=\left\{(\bar{u}^{\mB},\bar \theta^{\mB})\;\biggl|\;\rho_+u_+^2\bar{u}^{\mB}+\Mplus^2\gamma\kappa\left(\dfrac{R\rho_+u_+}{\kappa(\gamma-1)}-\lambda_2\right)\bar\theta^{\mB}=0\right\}.
\label{eq:3.19}
\end{equation}
We denote these two branches by $\Gamma_1'$ and $\Gamma_2'$.

First consider the curve $\Gamma_1'$. Define
\begin{equation*}
l_1:=\{(\bar{u}^{\mB},\bar\theta^{\mB})\;|\;\bar\theta^{\mB}=h_1(\bar{u}^{\mB}),\ 0\le \bar{u}^{\mB}\le -u_+\},
\end{equation*}
and let Region I be the region enclosed by $\bar{u}^{\mB}=0$, $L_0$, and $l_1$; see Figure~\ref{fig:3.1}. We continue the stable curve from the far-field equilibrium $O$ in the direction of decreasing $x$. Since $H_1=0$ and $H_2\ge0$ on $l_1$, the vector field is vertical and points upward there. Thus the backward continuation of $\Gamma_1'$ cannot leave Region I through $l_1$. Moreover, in Region I one has
\[
H_1<0,\qquad H_2>0.
\]
In other words,
\begin{equation*}
\dfrac{\mathrm{d}\bar\theta^{\mB}}{\mathrm{d}\bar{u}^{\mB}}<0,
\end{equation*}
so that the curve is monotone in the phase plane. Since Region I contains no finite equilibrium point of \eqref{eq:3.7}, the curve cannot terminate in the interior as $x$ decreases. It must therefore meet the lower thermodynamic boundary $L_0$ at a unique point, denoted by $Z_1$. After this point the extension would enter the non-physical region $\theta^{\mB}<0$, so we keep only the segment of $\Gamma_1'$ between $O$ and $Z_1$.

Conversely, every point $Y\in\Gamma_1'\setminus\{O,Z_1\}$ generates the same orbit with the forward orientation. By the boundary-vector-field argument above, this orbit remains in Region I, stays in the physical domain, and approaches $O$ as $x\to+\infty$. It therefore defines an admissible boundary layer solution.

\begin{figure}[htbp]
\centering
\includegraphics[width=0.3\textwidth]{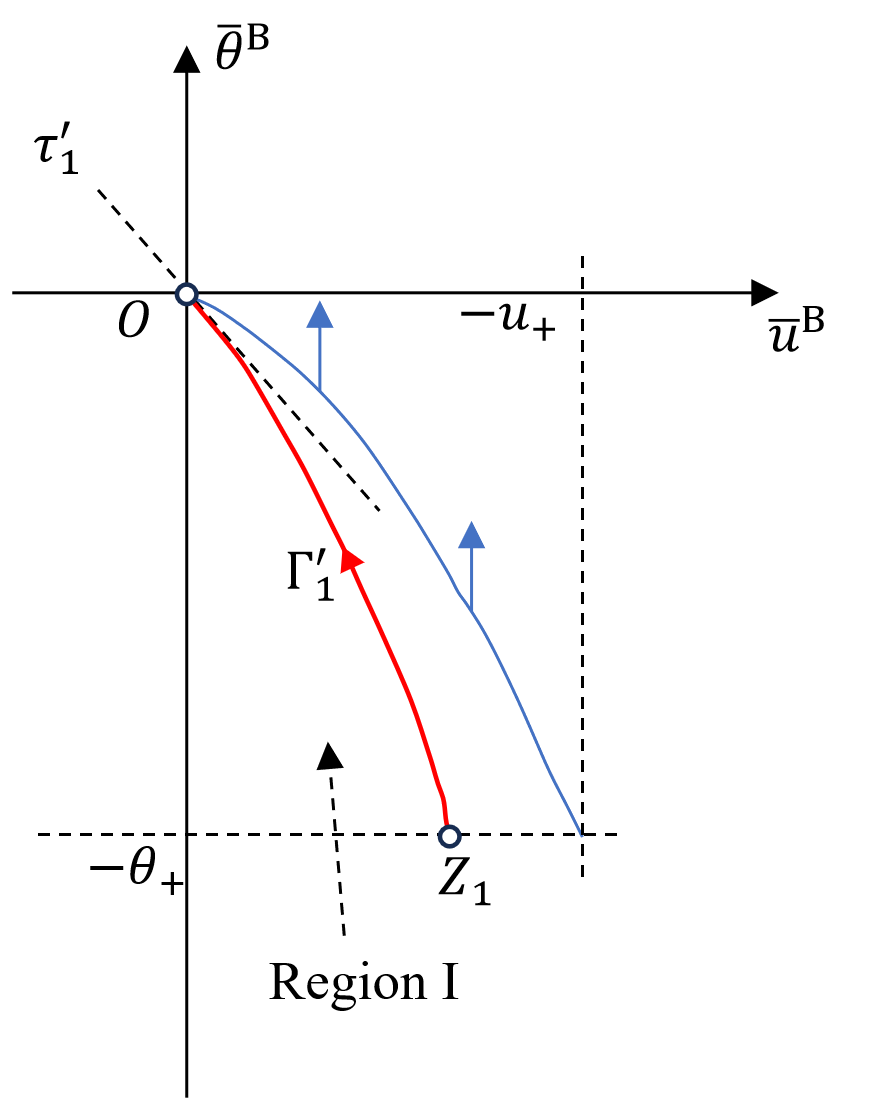}
\caption{Continuation of $\Gamma_1'$ in the subsonic outflow case.}
\label{fig:3.1}
\end{figure}

Next consider $\Gamma_2'$. Near $O$ it lies in the upper-left region
\begin{equation}
\mathrm{II}:=\{(\bar{u}^{\mB},\bar\theta^{\mB})\;|\;\bar{u}^{\mB}<0,\ \bar\theta^{\mB}> \max\{h_1(\bar{u}^{\mB}),h_2(\bar{u}^{\mB})\}\}.
\label{eq:3.20}
\end{equation}
If it leaves Region II through $\bar\theta^{\mB}=h_2(\bar{u}^{\mB})$, it enters
\begin{equation*}
\mathrm{III}:=\{(\bar{u}^{\mB},\bar\theta^{\mB})\;|\;h_1(\bar{u}^{\mB})\le \bar\theta^{\mB}\le h_2(\bar{u}^{\mB}),\ \bar{u}^{\mB}\le \alpha_1u_+,\ \bar\theta^{\mB}>-\theta_+\},
\label{eq:3.21}
\end{equation*}
where
\begin{equation*}
\alpha_1:=\dfrac{2(1-\Mplus^2)}{\Mplus^2(\gamma+1)}.
\label{eq:3.22}
\end{equation*}
In Region II one has $(\bar{u}^{\mB})'>0$, $(\bar\theta^{\mB})'<0$ and $\dfrac{\mathrm{d}\bar\theta^{\mB}}{\mathrm{d}\bar{u}^{\mB}}<0$, while in Region III one has $(\bar{u}^{\mB})'>0$, $(\bar\theta^{\mB})'>0$ and $\dfrac{\mathrm{d}\bar\theta^{\mB}}{\mathrm{d}\bar{u}^{\mB}}>0$. Thus the curve is monotone within each region and can leave only through the indicated nullcline or the physical boundary, unless it remains unbounded in the same region. The possible continuations of $\Gamma_2'$ are therefore the following five situations, illustrated in Figures~\ref{fig:3.2}--\ref{fig:3.6}:
\begin{enumerate}
\item[(A)] $\Gamma_2'$ never leaves Region II and tends to a vertical asymptote;
\item[(B)] $\Gamma_2'$ never leaves Region II and is unbounded as $x\to-\infty$;
\item[(C)] $\Gamma_2'$ enters Region III but tends to a horizontal asymptote;
\item[(D)] $\Gamma_2'$ enters Region III and first meets $\bar\theta^{\mB}=h_1(\bar{u}^{\mB})$;
\item[(E)] $\Gamma_2'$ enters Region III and first meets $L_0$.
\end{enumerate}
\begin{figure}[htbp]
	\centering
	\includegraphics[width=0.7\textwidth]{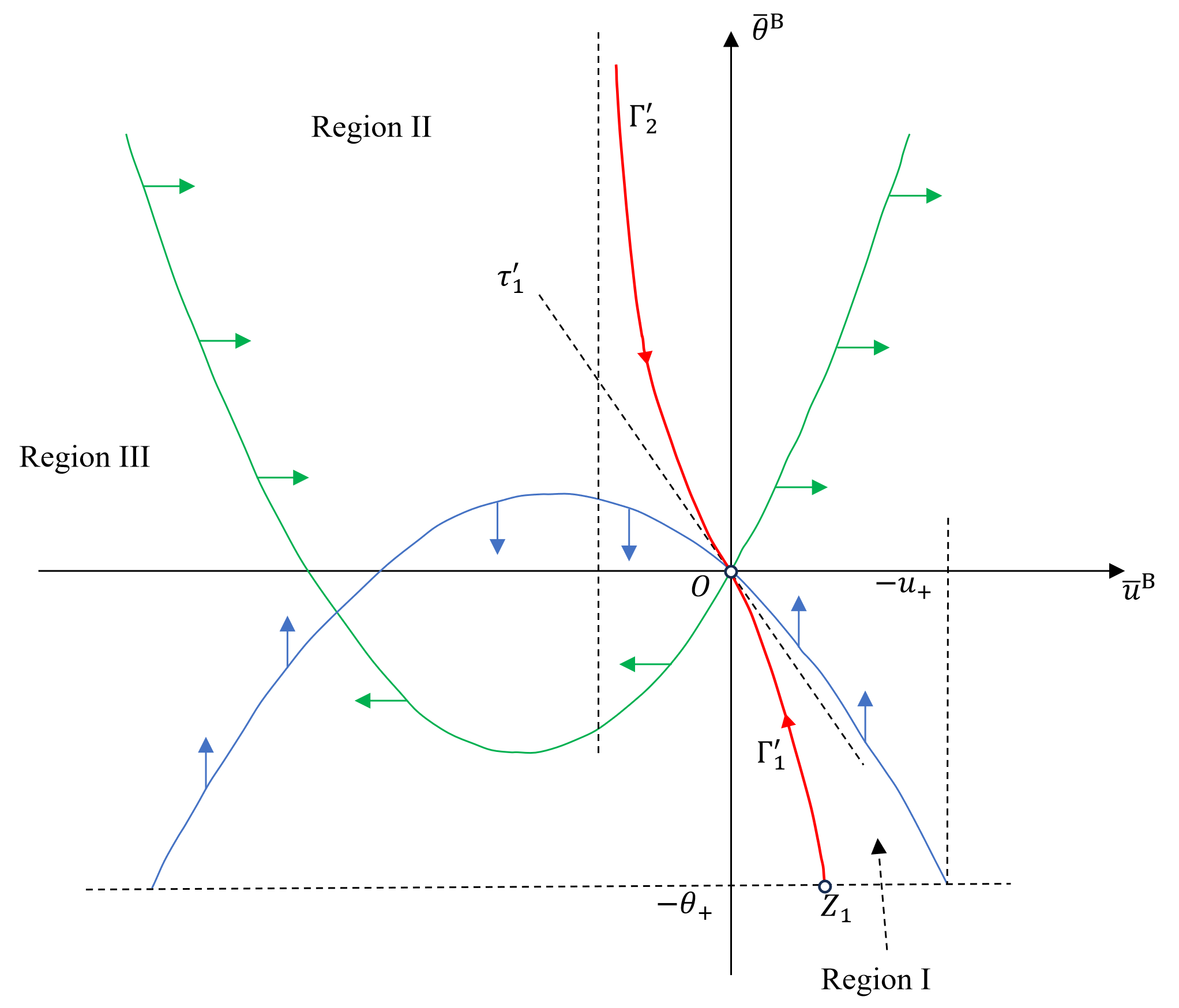}
	\caption{Situation (A).}
	\label{fig:3.2}
\end{figure}

\begin{figure}[htbp]
	\centering
	\includegraphics[width=0.7\textwidth]{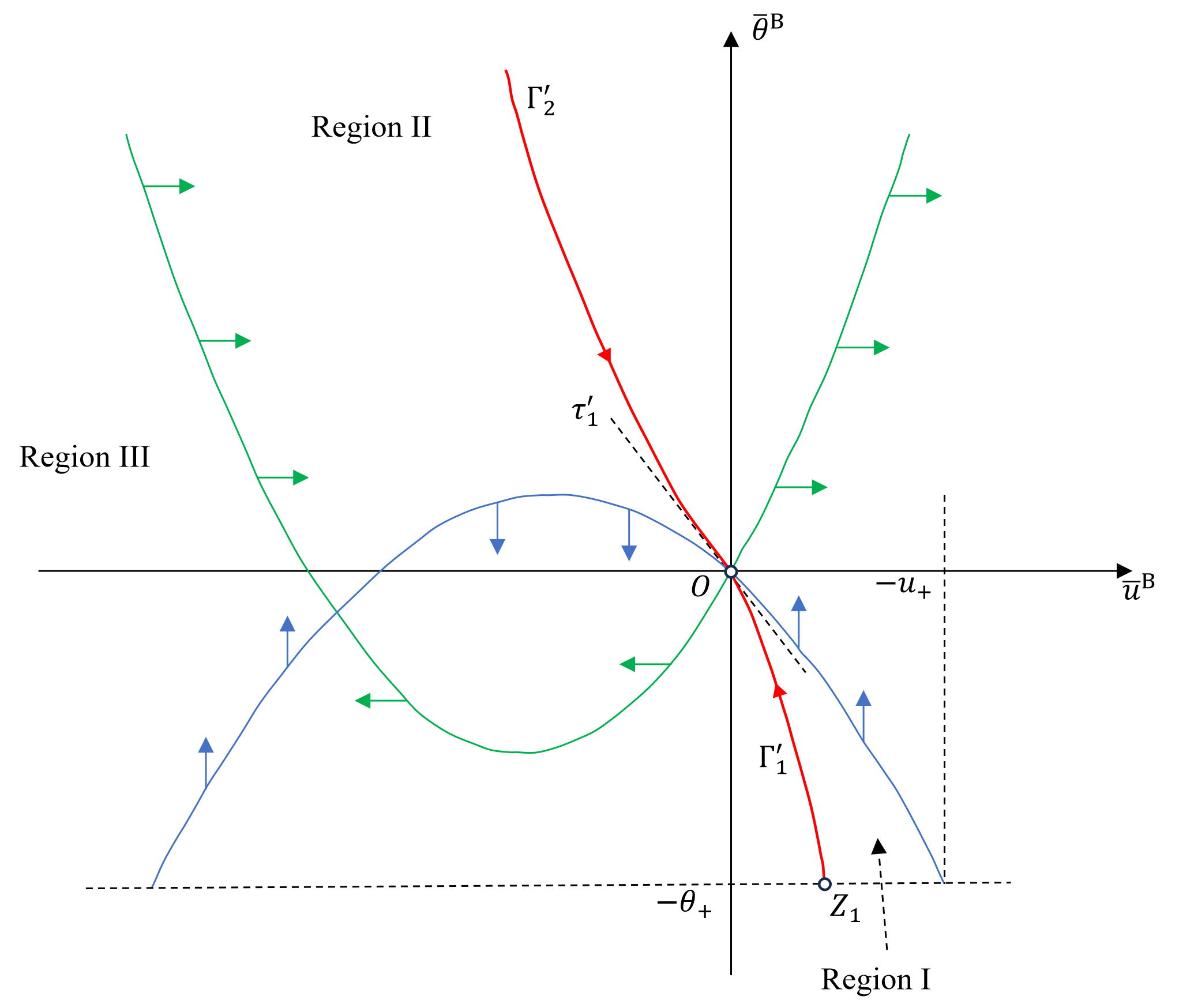}
	\caption{Situation (B).}
	\label{fig:3.3}
\end{figure}

\begin{figure}[htbp]
	\centering
	\includegraphics[width=0.8\textwidth]{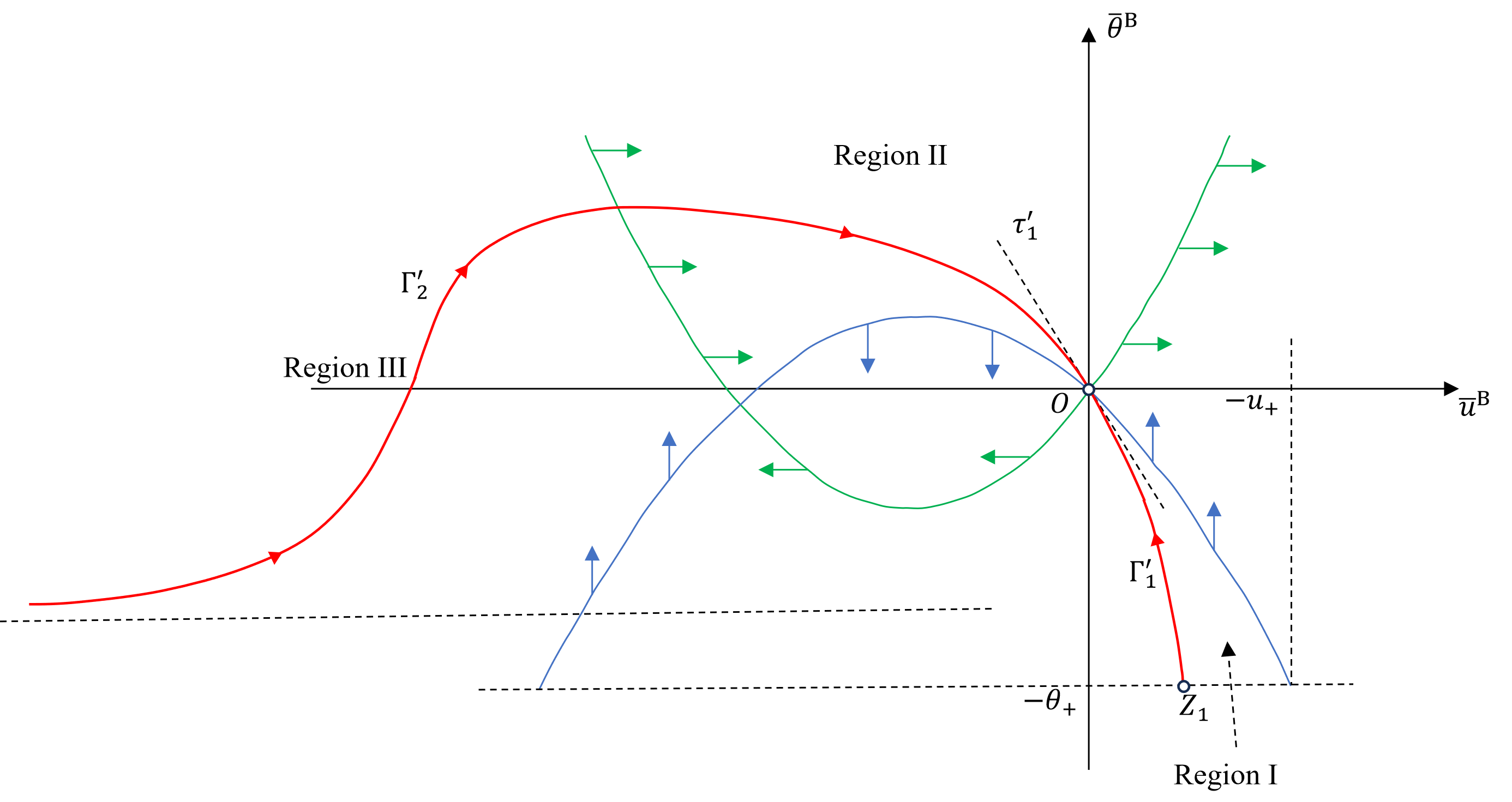}
	\caption{Situation (C).}
	\label{fig:3.4}
\end{figure}

\begin{figure}[htbp]
	\centering
	\includegraphics[width=0.8\textwidth]{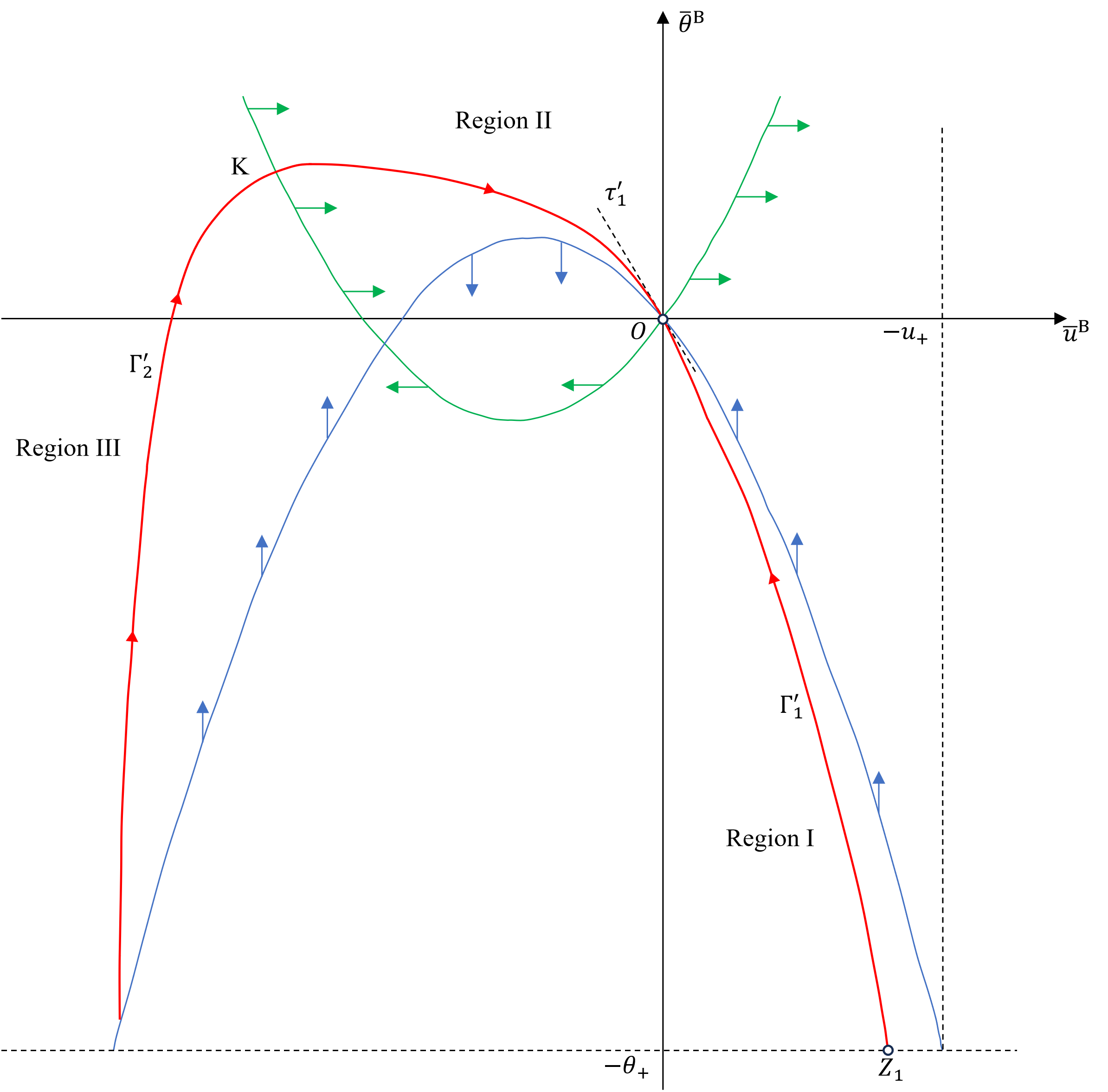}
	\caption{Situation (D).}
	\label{fig:3.5}
\end{figure}

\begin{figure}[htbp]
	\centering
	\includegraphics[width=0.8\textwidth]{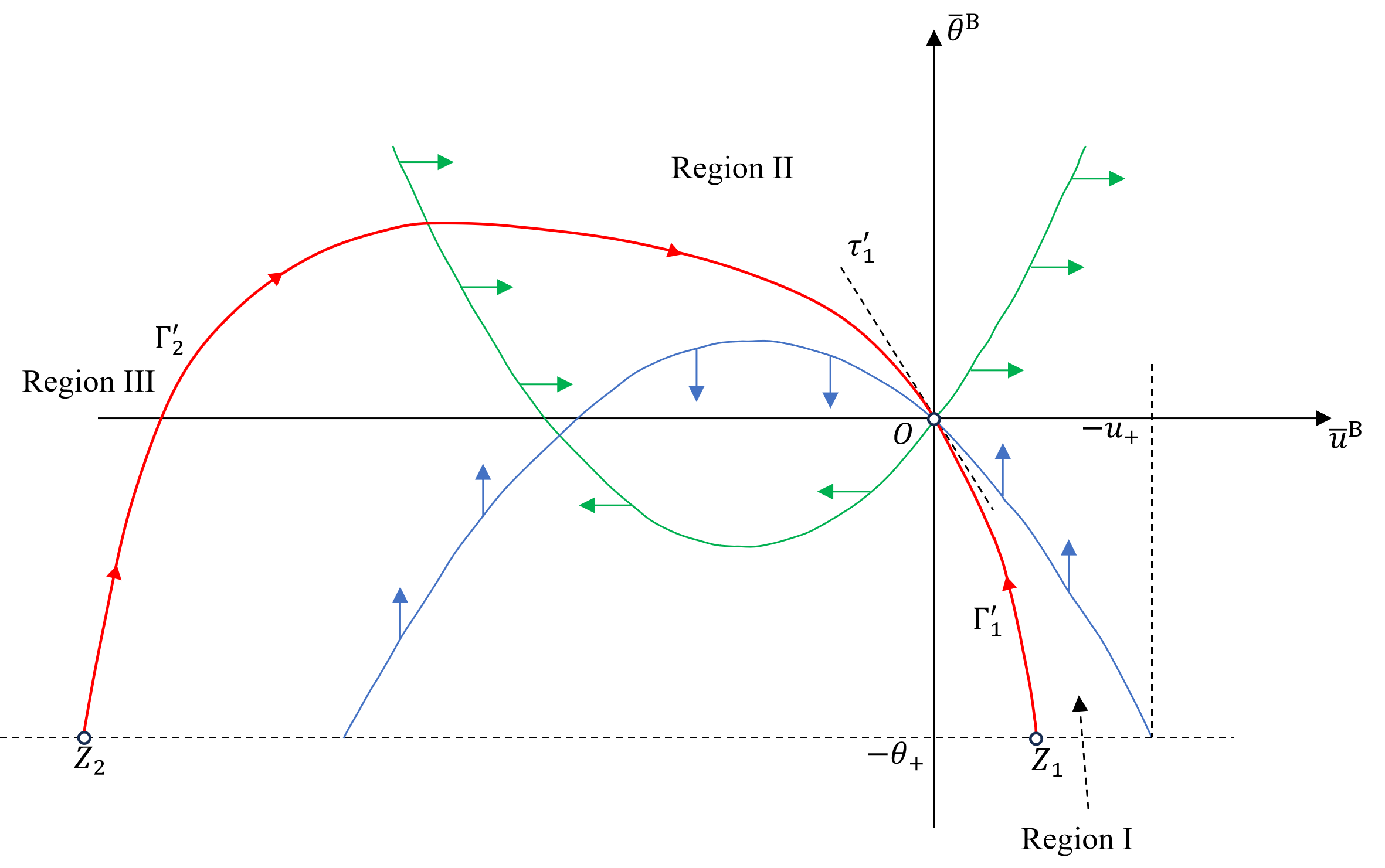}
	\caption{Situation (E).}
	\label{fig:3.6}
\end{figure}

We now exclude situations (A)(C)(D) in detail. By \eqref{eq:3.7} and \eqref{eq:3.16} we have
\begin{align}
	\dfrac{\mathrm{d}\bar \theta^{\mB}}{\mathrm{d}\bar{u}^{\mB}}
	&=
	\dfrac{
		\left(\bar{u}^{\mB} + u_+ \right)
		\displaystyle\left(
		\dfrac{R}{\kappa(\gamma-1)} \bar \theta^{\mB}
		- \dfrac{1}{2\kappa} (\bar{u}^{\mB})^2
		+ \dfrac{u_+}{M_+^2 \gamma \kappa} \bar{u}^{\mB}\right)
	}{\displaystyle\dfrac{(M_+^2 \gamma - 1)u_+}{M_+^2 \gamma \mu} \bar{u}^{\mB}+ \dfrac{R}{\mu} \bar \theta^{\mB}
		+ \dfrac{1}{\mu} (\bar{u}^{\mB})^2}
	\label{eq:3.28}
	\\&=
	\dfrac{\mu(\bar{u}^{\mB}+u_+)(\bar \theta^{\mB}-h_2(\bar{u}^{\mB}))}
	{\kappa(\gamma-1)(\bar \theta^{\mB}-h_1(\bar{u}^{\mB}))}.
	\label{eq:3.29}
\end{align}

Suppose first that (A) occurs. Then the curve stays in Region II and, when it is continued backward, approaches a vertical asymptote. Thus there is a finite number $C<0$ such that
\begin{equation*}
\bar{u}^{\mB}\to C,
\qquad
\bar\theta^{\mB}\to+\infty .
\end{equation*}
Since the curve remains away from the singular line $\bar{u}^{\mB}=-u_+$, the denominator in \eqref{eq:3.29} is dominated by $\dfrac{R}{\mu}\bar\theta^{\mB}$, while the numerator is dominated by $\dfrac{R(\bar{u}^{\mB}+u_+)}{\kappa(\gamma-1)}\bar\theta^{\mB}$. Consequently,
\begin{equation}
\dfrac{\mathrm{d}\bar\theta^{\mB}}{\mathrm{d}\bar{u}^{\mB}}
\longrightarrow
\dfrac{\mu(C+u_+)}{\kappa(\gamma-1)},
\qquad x\to-\infty,
\label{eq:3.23}
\end{equation}
which is finite. This contradicts the geometric meaning of a vertical asymptote, for which the slope $\dfrac{\mathrm{d}\bar\theta^{\mB}}{\mathrm{d}\bar{u}^{\mB}}$ must be unbounded. Hence (A) is impossible.

Situation (C) is ruled out by the dual argument. In this case the curve has already entered Region III and approaches a horizontal asymptote, so that
\begin{equation*}
\bar{u}^{\mB}\to-\infty,
\qquad
\bar\theta^{\mB}\to C
\end{equation*}
for some finite constant $C>-\theta_+$. Substituting this into \eqref{eq:3.29}, the dominant terms are
\begin{equation*}
(\bar{u}^{\mB}+u_+)
\left(-\dfrac{1}{2\kappa}(\bar{u}^{\mB})^2\right)
\quad\hbox{in the numerator},
\qquad
\dfrac{1}{\mu}(\bar{u}^{\mB})^2
\quad\hbox{in the denominator}.
\end{equation*}
Therefore
\begin{equation}
\dfrac{\mathrm{d}\bar\theta^{\mB}}{\mathrm{d}\bar{u}^{\mB}}
=-\dfrac{\mu}{2\kappa}\bar{u}^{\mB}+o(1)\bar{u}^{\mB}\to+\infty,
\qquad \bar{u}^{\mB}\to-\infty.
\label{eq:3.24}
\end{equation}
This is incompatible with a horizontal asymptote, along which the slope must tend to zero. Hence (C) is impossible.

It remains to exclude (D). Suppose (D) holds, let $K$ be the first point where $\Gamma_2'$ leaves Region II and enters Region III. Then $K$ lies on the nullcline $\bar\theta^{\mB}=h_2(\bar{u}^{\mB})$. Since $K$ is in the second quadrant, its abscissa is strictly to the left of the negative zero of $h_2$, namely
\begin{equation}
\bar{u}^{\mB}=\dfrac{2u_+}{\Mplus^2\gamma}<0.
\label{eq:3.25}
\end{equation}
Inside Region III we have $(\bar{u}^{\mB})'>0$ and $(\bar\theta^{\mB})'>0$. Thus, before the solution curve can leave Region III through another boundary, its abscissa must still belong to the interval to the left of \eqref{eq:3.25}. If (D) were true, the first exit point from Region III would lie on $\bar\theta^{\mB}=h_1(\bar{u}^{\mB})$ with
\begin{equation*}
\bar{u}^{\mB}\le \dfrac{2u_+}{\Mplus^2\gamma}.
\end{equation*}
However, on this interval the curve $L_1$ is already below the physical boundary $L_0$. Indeed,
\begin{equation*}
h_1(\bar{u}^{\mB})+\theta_+
=-\dfrac{1}{R}\left(\bar{u}^{\mB}-\dfrac{u_+}{\Mplus^2\gamma}\right)(\bar{u}^{\mB}+u_+)<0,
\qquad
\bar{u}^{\mB}\le\dfrac{2u_+}{\Mplus^2\gamma},
\end{equation*}
because $u_+<0$. In particular,
\begin{equation}
h_1\left(\dfrac{2u_+}{\Mplus^2\gamma}\right)
=-\dfrac{2u_+^2(\Mplus^2\gamma+1)}{R\Mplus^4\gamma^2}
=-\theta_+\dfrac{2(\Mplus^2\gamma+1)}{\Mplus^2\gamma}< -\theta_+.
\label{eq:3.26}
\end{equation}
Therefore, before $\Gamma_2'$ can touch the boundary $\bar\theta^{\mB}=h_1(\bar{u}^{\mB})$ of Region III, it must first cross $L_0$. This contradicts the defining property of (D).

We now distinguish the situations (B) and (E). We first define
\begin{align}
	f(q)=\dfrac{\displaystyle \dfrac{R}{\kappa(\gamma-1)}q - \dfrac{1}{2\kappa}}{\displaystyle \dfrac{2R}{\mu}q + \dfrac{2}{\mu}}.
\end{align}
A direct calculation gives
\begin{equation*}
f(q)=q
\quad\Longleftrightarrow\quad
\dfrac{2R}{\mu}q^2
+\left(\dfrac{2}{\mu}-\dfrac{R}{\kappa(\gamma-1)}\right)q
+\dfrac{1}{2\kappa}=0.
\end{equation*}
This equation has a positive root if and only if $\zeta\leq0$, where $\zeta$ is defined in \eqref{eq:2.2}. We next show that this condition is equivalent to situation (B).

We introduce new variables
\begin{align}
	\eta=(\bar{u}^{\mB})^2, \qquad \alpha=\dfrac{\bar \theta^{\mB}}{(\bar{u}^{\mB})^2}.
\end{align}
The following comparison estimate is the key step:
\begin{lemma}\label{lem:3.1}
	For $(\bar{u}^{\mB},\bar \theta^{\mB})\in$ II, it holds
	\begin{equation}
		\dfrac{\mathrm{d}\alpha}{\mathrm{d}\eta}>\dfrac{f(\alpha)-\alpha}{\eta}.
	\end{equation}
\end{lemma}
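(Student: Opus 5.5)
The plan is to turn the inequality into a polynomial sign condition on Region II, using the nullcline relations \eqref{eq:3.29} for $\mathrm{d}\bar\theta^{\mB}/\mathrm{d}\bar u^{\mB}$. Write $u=\bar u^{\mB}$, $\theta=\bar\theta^{\mB}$ and $\theta_u=\mathrm{d}\theta/\mathrm{d}u$ for brevity. From $\eta=u^2$ and $\alpha=\theta/u^2$ we get
\[
\frac{\mathrm{d}\alpha}{\mathrm{d}\eta}=\frac{1}{2u}\Big(\frac{\theta_u}{u^2}-\frac{2\theta}{u^3}\Big)=\frac{\theta_u}{2u^3}-\frac{\alpha}{\eta}.
\]
The claim is therefore equivalent to $\theta_u/(2u^3)>f(\alpha)/u^2$. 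In Region II we have $u<0$, so multiplying by $2u^3<0$ reduces it to
\[
\theta_u<2uf(\alpha).
\]
Clearing the $u^2$ inside $f$, and writing $E:=\frac{R\theta}{\kappa(\gamma-1)}-\frac{u^2}{2\kappa}$, this becomes
\[
2uf(\alpha)=\frac{\mu uE}{R\theta+u^2}.
\]

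The next step is to record the signs available in Region II.
\begin{itemize}
\item From \eqref{eq:3.28}, $\theta_u=\mu(u+u_+)(E+cu)/D$, where $c:=u_+/(\Mplus^2\gamma\kappa)<0$ and $D:=R\theta+u^2+au$ with $a:=(\Mplus^2\gamma-1)u_+/(\Mplus^2\gamma)$.
\item By \eqref{eq:3.29}, $D=R(\theta-h_1)>0$ and $E+cu=\frac{R}{\kappa(\gamma-1)}(\theta-h_2)>0$.
\item Since $u<0$ and $u_+<0$, we have $u+u_+<0$.
\item For $u<0$ one has $h_2(u)>0$, so $\theta>0$ in Region II and hence $R\theta+u^2>0$.
\end{itemize}
Multiplying the target inequality by the positive quantity $D(R\theta+u^2)/\mu$ reduces it to showing that
\[
\Phi:=(u+u_+)(E+cu)(R\theta+u^2)-uE(R\theta+u^2+au)<0 .
\]
Expanding and using $u_+-a=u_+/(\Mplus^2\gamma)$ gives
\[
\Phi=u_+\Big(R\theta+\frac{u^2}{\Mplus^2\gamma}\Big)E+cu(u+u_+)(R\theta+u^2).
\]
The second term is strictly negative because $cu>0$ and $u+u_+<0$. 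If $E\ge0$, the first term is $\le0$ because $u_+<0$, and we are done.

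The main obstacle is the case $E<0$. There I would use $E>-cu$, which is just $\theta>h_2$. Since $u_+(R\theta+u^2/(\Mplus^2\gamma))<0$, this gives
\[
\Phi<cu\Big[uR\theta+u^3+u_+u^2\Big(1-\frac{1}{\Mplus^2\gamma}\Big)\Big].
\]
The bracket is negative whenever $\Mplus^2\gamma\ge1$, which settles that range. When $\Mplus^2\gamma<1$, the last term in the bracket is positive. I would then also use $\theta>h_1$, i.e.\ $R\theta>-u^2-au$ with $-a<0$ in this range, to bound $uR\theta$ from above (note $u<0$). Combined with the negativity of $u+u_+$, this should absorb the term $u_+u^2(1-1/(\Mplus^2\gamma))$. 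If this bound turns out to be too weak, the fallback is to treat $\Phi$ as a quadratic polynomial in $\theta$ on the half-line $\theta>\max\{h_1,h_2\}$ and check its sign at the endpoint together with its leading coefficient.
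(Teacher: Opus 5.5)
Your reduction is correct: $\mathrm{d}\alpha/\mathrm{d}\eta=\theta_u/(2u^3)-\alpha/\eta$, the target becomes $\theta_u<\mu uE/(R\theta+u^2)$, and your expansion of $\Phi$ checks out. The gap is in the sign bookkeeping. The claim that $h_2(u)>0$ for $u<0$ is false: $h_2(u)=\frac{\gamma-1}{2R}u\bigl(u-\frac{2u_+}{M_+^2\gamma}\bigr)<0$ for $\frac{2u_+}{M_+^2\gamma}<u<0$. So Region II contains points with $\theta<0$, and neither $P:=R\theta+u^2>0$ nor $Q:=R\theta+u^2/(M_+^2\gamma)>0$ is automatic. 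You need $P>0$ to multiply through without reversing the inequality and to sign $cu(u+u_+)P$. You need $Q>0$ to turn $E>-cu$ into your upper bound for $\Phi$.

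This cannot be patched on all of II, because the stated inequality actually fails there when $M_+^2\gamma>1$. In that case $h_1(u)+u^2/R=-au/R<0$, and $h_2(u)<-u^2/R$ for $\frac{2(\gamma-1)u_+}{(\gamma+1)M_+^2\gamma}<u<0$. Hence II contains points with $\alpha$ slightly below $-1/R$. At such points $\mathrm{d}\alpha/\mathrm{d}\eta$ is finite (since $\theta-h_1>0$), while $f(\alpha)\to+\infty$. For instance, take $R=\kappa=\mu=1$, $u_+=-1$, $\gamma=1.4$, $M_+^2\gamma=1.2$ and $(u,\theta)=(-0.1,-0.0101)$. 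Then $\mathrm{d}\alpha/\mathrm{d}\eta\approx1.86\times10^{3}$, whereas $(f(\alpha)-\alpha)/\eta\approx1.52\times10^{4}$.

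The paper's proof carries the same hidden restriction. Its first chained inequality, $\bar\theta^{\mB}-h_1<(\bar u^{\mB}+u_+)\bigl(\bar\theta^{\mB}/\bar u^{\mB}+\bar u^{\mB}/R\bigr)$, is equivalent to $\frac{u_+}{R\bar u^{\mB}}\bigl(R\bar\theta^{\mB}+(\bar u^{\mB})^2/(M_+^2\gamma)\bigr)>0$, which is exactly your condition $Q>0$. The lemma is only invoked at $\alpha=\beta>0$, so the right repair is to add the hypothesis $\bar\theta^{\mB}\ge0$ (or $Q>0$). When $M_+^2\gamma\le1$ no extra hypothesis is needed: then $a\ge0$ gives $P=D-au\ge D>0$ and $Q\ge P$ on all of II.

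Under that hypothesis your argument is complete, and the ``main obstacle'' disappears. Since $u_+\bigl(1-\frac{1}{M_+^2\gamma}\bigr)=a$, your bracket is exactly $uR\theta+u^3+au^2=uD=Ru(\theta-h_1)<0$. Indeed, $\Phi=u_+Q(E+cu)+cu^2D$ holds identically. This is negative whenever $Q\ge0$, so you need no case split on the sign of $E$ or of $M_+^2\gamma-1$, and no fallback.

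The paper instead works directly on the quotient \eqref{eq:3.29} by two monotone replacements. First it enlarges the positive denominator $\bar\theta^{\mB}-h_1$ and cancels $\bar u^{\mB}+u_+$. Then it discards the positive term $\frac{(\gamma-1)u_+\bar u^{\mB}}{RM_+^2\gamma}$ from the numerator. Your cross-multiplied identity is a genuinely different, polynomial route. It takes a little more setup, but it shows exactly which positivity condition is used, which is what exposes the missing hypothesis.
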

\begin{proof}
	Since $(\bar{u}^{\mB},\bar \theta^{\mB})\in$ II, we have $\bar{u}^{\mB}<0$. By \eqref{eq:3.20} we have
	\begin{align*}
		0 &< \bar \theta^{\mB} + \dfrac{(\bar{u}^{\mB})^2}{R} + \dfrac{(M_+^2 \gamma - 1) u_+}{RM_+^2 \gamma} \bar{u}^{\mB} \\
		&< \bar \theta^{\mB} \dfrac{\bar{u}^{\mB} + u_+}{\bar{u}^{\mB}} + \dfrac{(\bar{u}^{\mB})^2}{R} + \dfrac{u_+}{R} \bar{u}^{\mB} \\
		&= \bigl(\bar{u}^{\mB} + u_+\bigr)\left( \dfrac{\bar \theta^{\mB}}{\bar{u}^{\mB}} + \dfrac{\bar{u}^{\mB}}{R} \right)
	\end{align*}
    and
    \begin{align*}
    	\bar \theta^{\mB} - \dfrac{\gamma - 1}{2R} (\bar{u}^{\mB})^2 + \dfrac{(\gamma - 1) u_+}{RM_+^2 \gamma} \bar{u}^{\mB}>0.
    \end{align*}
    Using \eqref{eq:3.29} we get
    \begin{align*}
    	\dfrac{\mathrm{d}\bar \theta^{\mB}}{\mathrm{d}\bar{u}^{\mB}}
    	&<\dfrac{\mu(\bar{u}^{\mB} + u_+)\displaystyle \left(\bar \theta^{\mB} - \dfrac{\gamma - 1}{2R} (\bar{u}^{\mB})^2 + \dfrac{(\gamma - 1) u_+}{RM_+^2 \gamma} \bar{u}^{\mB}\right)}
    	{\kappa(\gamma-1)\bigl(\bar{u}^{\mB} + u_+\bigr)
    	\displaystyle\left( \dfrac{\bar \theta^{\mB}}{\bar{u}^{\mB}} + \dfrac{\bar{u}^{\mB}}{R} \right)}\\
    	&<\dfrac{\mu\displaystyle \left(\bar \theta^{\mB} - \dfrac{\gamma - 1}{2R} (\bar{u}^{\mB})^2\right)}
    	{\kappa(\gamma-1)\displaystyle\left( \dfrac{\bar \theta^{\mB}}{\bar{u}^{\mB}} + \dfrac{\bar{u}^{\mB}}{R} \right)}\\
    	&=2\bar{u}^{\mB}\dfrac{\displaystyle \dfrac{R}{\kappa(\gamma-1)}\dfrac{\bar \theta^{\mB}}{(\bar{u}^{\mB})^2} - \dfrac{1}{2\kappa}}{\displaystyle \dfrac{2R}{\mu}\dfrac{\bar \theta^{\mB}}{(\bar{u}^{\mB})^2} + \dfrac{2}{\mu}}\\
    	&=2\bar{u}^{\mB}f(\alpha).
    \end{align*}
    Note that
    \begin{align*}
    	\dfrac{\mathrm{d}\bar \theta^{\mB}}{\mathrm{d}\eta}
    	=\dfrac{\mathrm{d}\bar \theta^{\mB}}{2\bar{u}^{\mB}\mathrm{d}\bar{u}^{\mB}},
    	\qquad
    	\dfrac{\mathrm{d}\alpha}{\mathrm{d}\eta}=\dfrac{1}{\eta}\displaystyle\left(
    	\dfrac{\mathrm{d}\bar \theta^{\mB}}{\mathrm{d}\eta}-
    	\dfrac{\bar \theta^{\mB}}{\eta}\right).
    \end{align*}
    Since $\bar{u}^{\mB}<0$, we conclude that
    \begin{equation*}
    	\dfrac{\mathrm{d}\alpha}{\mathrm{d}\eta}>\dfrac{f(\alpha)-\alpha}{\eta}.
    \end{equation*}
\end{proof}

We now prove
\begin{proposition}
	If $f(q)=q$ has positive roots, then (B) occurs.
\end{proposition}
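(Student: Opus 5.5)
Let $q_*>0$ be a positive root of $f(q)=q$. The plan is to use Lemma~\ref{lem:3.1} as a barrier: the line $\alpha=q_*$, i.e. the parabola $\bar\theta^{\mB}=q_*(\bar{u}^{\mB})^2$ in the second quadrant, should not be crossed downward by the backward continuation of $\Gamma_2'$. Once the orbit stays above this parabola, it can never reach $h_2$ (where $\alpha\to(\gamma-1)/(2R)$ plus a lower-order term). Since (A) is already excluded, this forces (B).

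First I would order the constants. Writing $g(q)=\frac{2R}{\mu}q^2+\bigl(\frac{2}{\mu}-\frac{R}{\kappa(\gamma-1)}\bigr)q+\frac{1}{2\kappa}$, one has $f(q)-q=-g(q)\big/\bigl(\frac{2R}{\mu}q+\frac{2}{\mu}\bigr)$. Moreover $f\bigl(\frac{\gamma-1}{2R}\bigr)=0<\frac{\gamma-1}{2R}$, so $g>0$ there. Because $g(0)>0$ and the roots of $g$ have positive product, both positive roots lie on the same side of $\frac{\gamma-1}{2R}$. I expect they lie to the right, and I would verify this by checking that the vertex $\frac{R/(\kappa(\gamma-1))-2/\mu}{4R/\mu}$ exceeds $\frac{\gamma-1}{2R}$ whenever the discriminant condition $\zeta\le0$ holds. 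If this check fails, I would instead take $q_*$ to be the larger root, on which the argument below depends anyway. In the region $\alpha<q_*$ but above the smaller root, one has $g<0$, so $f(\alpha)>\alpha$.

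Next, the behaviour near $O$. Along $\Gamma_2'$ we have $\bar\theta^{\mB}\sim k|\bar{u}^{\mB}|$ with $k>0$, by the tangency to $\tau_1'$. Hence $\alpha\to+\infty$ as $\eta\to0^+$, so initially $\alpha>q_*$. The lemma gives $\frac{d\alpha}{d\eta}>\frac{f(\alpha)-\alpha}{\eta}$. Backward continuation into Region II means $\bar{u}^{\mB}$ decreases, so $\eta$ increases along the parametrization. This is legitimate because $d\bar\theta^{\mB}/d\bar{u}^{\mB}<0$ and the orbit is a graph over $\bar u$ in II. Suppose $\alpha$ first reaches $q_*$ at some $\eta_0$. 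There $\frac{d\alpha}{d\eta}>\frac{f(q_*)-q_*}{\eta_0}=0$. But a first downward crossing requires $\frac{d\alpha}{d\eta}\le 0$, a contradiction. So $\alpha>q_*$ on the whole portion of $\Gamma_2'$ in Region II.

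Finally, I would show the orbit cannot reach $L_2$. On $L_2$, in the second quadrant, $\alpha=h_2/\eta=\frac{\gamma-1}{2R}\bigl(1-\frac{2u_+}{\Mplus^2\gamma\bar{u}^{\mB}}\bigr)<\frac{\gamma-1}{2R}$, using $u_+/\bar u^{\mB}>0$. If $q_*\ge\frac{\gamma-1}{2R}$, the bound $\alpha>q_*$ prevents exit through $L_2$. The orbit cannot exit through $L_1$ while remaining in II, since the vector field points appropriately there, as in the proof of the lemma where $h_1<h_2$ on the relevant part. It also cannot end at an equilibrium, since $P$ is outside II in the subsonic case. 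Therefore it stays in Region II forever. Because (A) is impossible, situation (B) occurs.

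The main obstacle is the ordering step $q_*\ge\frac{\gamma-1}{2R}$, together with handling $\alpha$ near the smaller root when there are two distinct roots. I would settle this by computing $g\bigl(\frac{\gamma-1}{2R}\bigr)>0$ and showing directly that the vertex of $g$ lies right of $\frac{\gamma-1}{2R}$ under $\zeta\le0$. This should reduce to $\frac{R}{\kappa(\gamma-1)}\ge\frac{4}{\mu}$, which follows from $\zeta\le0$ combined with the AM–GM bound $2\sqrt{R/(\kappa\mu)}\ge\dots$.
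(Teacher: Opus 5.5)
Your argument is the paper's proof. Near $O$ you have $\alpha>q_*$. Lemma~\ref{lem:3.1} forces $\frac{\mathrm{d}\alpha}{\mathrm{d}\eta}>0$ at any first point with $\alpha=q_*$, so $\alpha>q_*$ persists. Then $q_*>\frac{\gamma-1}{2R}$ keeps the orbit strictly above $L_2$, and with (A) excluded, (B) follows. You also handle the part of the boundary of Region II formed by $L_1$ (for $\alpha_1u_+<\bar{u}^{\mB}<0$), which the paper passes over. There, however, one has $h_1>h_2$, not $h_1<h_2$. So $H_1=0$, $H_2<0$, and the backward field points vertically up into II, which is what actually justifies your conclusion.

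The step you single out as the main obstacle is where your plan goes wrong, and it is in fact immediate. The denominator of $f$ is positive for $q\ge0$, so $f(q_*)=q_*>0$ forces $\frac{R}{\kappa(\gamma-1)}q_*-\frac{1}{2\kappa}>0$, i.e.\ $q_*>\frac{\gamma-1}{2R}$ for every positive root. This one line is how the paper does it.

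Your proposed vertex computation is mis-reduced. The vertex of $g$ lies to the right of $\frac{\gamma-1}{2R}$ if and only if $\frac{R}{\kappa(\gamma-1)}>\frac{2\gamma}{\mu}$, which does follow from $\zeta\le0$. It is not equivalent to $\frac{R}{\kappa(\gamma-1)}\ge\frac{4}{\mu}$, and that inequality is not implied by $\zeta\le0$. For example, $\mu=1$, $\gamma=1.1$, $R/\kappa=0.32$ gives $\zeta<0$ but $\frac{R}{\kappa(\gamma-1)}=3.2$. Your fallback of taking the larger root would not rescue a bad ordering either. It is also unnecessary, since either root serves as the barrier.
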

\begin{proof}
	Let $\beta>0$ satisfy $f(\beta)=\beta$, then
	\begin{align*}
		f(\beta) = \dfrac{\displaystyle \dfrac{R}{\kappa(\gamma-1)} \beta - \dfrac{1}{2\kappa}}{\displaystyle \dfrac{2R}{\mu} \beta + \dfrac{2}{\mu}} > 0 \implies \beta > \dfrac{\gamma-1}{2R}.
	\end{align*}
    Note that if $\bar \theta^{\mB}\geq\beta(\bar{u}^{\mB})^2$ for $\bar{u}^{\mB}<0$, then  $\bar \theta^{\mB}>h_2(\bar{u}^{\mB})$; hence, $\Gamma_2'$ never leaves Region II. We will prove $\bar \theta^{\mB}\geq\beta(\bar{u}^{\mB})^2$ for all $\bar{u}^{\mB}<0$, which implies (B) occurs. Since the curve has negative slope near $O$, there exist $C>0$ and $\delta>0$ such that
    \begin{align*}
    	\bar \theta^{\mB}>-C\bar{u}^{\mB}>\beta(\bar{u}^{\mB})^2,\qquad -\sqrt{\delta}<\bar{u}^{\mB}<0.
    \end{align*}
    In other words,
    \begin{align*}
    	\alpha(\eta)>\beta,\qquad 0<\eta<\delta.
    \end{align*}
    Set
    \begin{align*}
    	\mathcal A=\{t>0\;|\;\alpha(\eta)>\beta,\forall\,0<\eta<t\}.
    \end{align*}
    Let $T=\sup \mathcal A$. Suppose $T<\infty$. By continuity, $\alpha(T)=\beta$.
    Since $f(\beta)=\beta$, Lemma \ref{lem:3.1} gives $\left.\dfrac{\mathrm{d}\alpha}{\mathrm{d}\eta}\right|_{\eta=T}>0$. Thus there exists some $\eta\in(0,T)$, such that $\alpha(\eta)<\beta$, which leads to a contradiction with the definition of $\mathcal A$. Consequently, $T=\infty$, which means
    \begin{align*}
    	\dfrac{\bar \theta^{\mB}}{(\bar{u}^{\mB})^2}=\alpha\geq \beta
    \end{align*}
    for all $\bar{u}^{\mB}<0$.
\end{proof}

We proceed to prove
\begin{proposition}
	If $f(q)=q$ has no positive root, then (E) occurs.
\end{proposition}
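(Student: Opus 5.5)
Assume $\zeta>0$, so $f(q)=q$ has no positive root. By the five-way classification it is enough to rule out (B), since (A), (C) and (D) have already been excluded. The idea is to turn the one-sided comparison of Lemma~\ref{lem:3.1} into an asymptotic \emph{two-sided} estimate for large $|\bar u^{\mB}|$. The absence of a fixed point of $f$ then forces $\alpha$ to decrease without bound, which is incompatible with staying in Region II.

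\textbf{Step 1 (uniform gap).} Since $f(0)=-\mu/(4\kappa)<0$ and $f(q)\to \mu/(2\kappa(\gamma-1))$ as $q\to\infty$, the continuous function $f(q)-q$ is negative at $0$ and tends to $-\infty$. Having no root on $[0,\infty)$, it satisfies $f(q)-q\le -c_0<0$ for all $q\ge0$, for some constant $c_0>0$.

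\textbf{Step 2 (behaviour in case (B)).} Suppose (B) holds. In Region II we have $(\bar u^{\mB})'>0$, $(\bar\theta^{\mB})'<0$, and Region II contains no equilibrium other than $O$. As $x$ decreases, $\bar u^{\mB}$ decreases and $\bar\theta^{\mB}$ increases. A finite limit of both would be an equilibrium, which is impossible. A finite $\bar u$-limit with $\bar\theta^{\mB}\to\infty$ is situation (A), already excluded. Hence $\bar u^{\mB}\to-\infty$, i.e.\ $\eta\to\infty$. Moreover, in Region II we have $\bar\theta^{\mB}>h_2(\bar u^{\mB})$, so
\[
\alpha>\frac{\gamma-1}{2R}\Bigl(1-\frac{2u_+}{\Mplus^2\gamma\,\bar u^{\mB}}\Bigr).
\]
Thus $\alpha$ is bounded below by a positive constant $\alpha_0$ for $\eta$ large.

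\textbf{Step 3 (asymptotic ODE for $\alpha$).} I would rewrite \eqref{eq:3.29} with $\bar\theta^{\mB}=\alpha(\bar u^{\mB})^2$. Factoring out $(\bar u^{\mB})^2$ from both brackets gives
\[
\frac{\mathrm d\bar\theta^{\mB}}{\mathrm d\bar u^{\mB}}=\frac{\mu\,\bar u^{\mB}\bigl(1+\tfrac{u_+}{\bar u^{\mB}}\bigr)\bigl(\alpha-\tfrac{\gamma-1}{2R}+O(|\bar u^{\mB}|^{-1})\bigr)}{\kappa(\gamma-1)\bigl(\alpha+\tfrac1R+O(|\bar u^{\mB}|^{-1})\bigr)} .
\]
Because $\alpha\ge\alpha_0>0$, the denominator is bounded below proportionally to $\alpha+1/R$. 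The relative errors are therefore $O(\eta^{-1/2})$ \emph{uniformly in} $\alpha\ge\alpha_0$, which yields
\[
\frac{\mathrm d\bar\theta^{\mB}}{\mathrm d\bar u^{\mB}}=2\bar u^{\mB}\bigl(f(\alpha)+O(\eta^{-1/2})\bigr).
\]
Using the identities from the proof of Lemma~\ref{lem:3.1}, this becomes
\[
\frac{\mathrm d\alpha}{\mathrm d\eta}=\frac{f(\alpha)-\alpha+O(\eta^{-1/2})}{\eta}.
\]
Here $f$ is bounded on $[\alpha_0,\infty)$, so the error remains additive.

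\textbf{Step 4 (conclusion).} Combining Step 3 with Step 1, for $\eta\ge\eta_0$ we get $\frac{\mathrm d\alpha}{\mathrm d\eta}\le -\frac{c_0}{2\eta}$. Integrating gives $\alpha(\eta)\le \alpha(\eta_0)-\frac{c_0}{2}\log(\eta/\eta_0)\to-\infty$. This contradicts $\alpha\ge\alpha_0>0$ from Step 2, so (B) is impossible. Therefore $\Gamma_2'$ leaves Region II through $L_2$ into Region III, and by the exclusion of (C) and (D) it first meets $L_0$; that is, (E) occurs.

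The main obstacle is Step 3: the error terms must be controlled uniformly when $\alpha$ may be large. The key point is that $\alpha$ enters both numerator and denominator linearly, so the ratio is a bounded Möbius-type function of $\alpha$ with $O(\eta^{-1/2})$ perturbations. If needed, I would instead apply the same argument to the bounded quantity $\alpha/(1+\alpha)$.
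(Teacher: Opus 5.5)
Your proposal is correct and takes essentially the same route as the paper. Both arguments use a uniform gap $f(q)-q\le -c_0$ on $[0,\infty)$ and the asymptotic relation $\mathrm{d}\bar\theta^{\mB}/\mathrm{d}\eta=f(\alpha)+o(1)$ as $\bar u^{\mB}\to-\infty$ in case (B); integrating $\mathrm{d}\alpha/\mathrm{d}\eta\le -c_0/(2\eta)$ then forces $\alpha\to-\infty$, contradicting positivity. Your explicit checks fill in details the paper leaves implicit: that case (B) forces $\bar u^{\mB}\to-\infty$, and that the error term is uniform in $\alpha$ because $\alpha+1/R\ge 1/R$.
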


\begin{proof}
	Note that
	\begin{align*}
		f(0)<0,\qquad \lim\limits_{q\to \infty}(f(q)-q)=-\infty.
	\end{align*}
    Since $f(q)=q$ has no positive root, we have
    \begin{align*}
    	\sup\limits_{q\geq0}\,(f(q)-q)<0.
    \end{align*}
    Consequently, there exists some $\varepsilon_0>0$, such that
    \begin{align}
    	f(q)-q<-\varepsilon_0
    	\label{3.32}
    \end{align}
    for all $q\geq0$. Suppose, to the contrary, that (E) does not occur. Since (A), (C), and (D) have already been excluded, the only remaining possibility is (B); hence,
    \begin{align}
    	\dfrac{\bar \theta^{\mB}}{(\bar{u}^{\mB})^2}=\alpha> 0,\qquad\bar{u}^{\mB}<0.
    	\label{3.33}
    \end{align}
    By \eqref{eq:3.28} we have
    \begin{align*}
    	\dfrac{\mathrm{d}\bar \theta^{\mB}}{\mathrm{d}\eta}
    	&=
    	\dfrac{
    		\left(1+\dfrac{u_+}{\bar{u}^{\mB}}\right)
    		\left(
    		\dfrac{R}{\kappa(\gamma-1)} \dfrac{\bar \theta^{\mB}}{(\bar{u}^{\mB})^2}
    		- \dfrac{1}{2\kappa}
    		+ \dfrac{u_+}{M_+^2 \gamma \kappa} \dfrac{1}{\bar{u}^{\mB}}
    		\right)
    	}{
    		\dfrac{(M_+^2 \gamma -1)u_+}{M_+^2 \gamma \mu} \dfrac{2}{\bar{u}^{\mB}}
    		+ \dfrac{2R}{\mu} \dfrac{\bar \theta^{\mB}}{(\bar{u}^{\mB})^2}
    		+ \dfrac{2}{\mu}
    	}\\
    	&=
    	\dfrac{
    		\big(1+o(1)\big)
    		\left(
    		\dfrac{R}{\kappa(\gamma-1)} \dfrac{\bar \theta^{\mB}}{(\bar{u}^{\mB})^2}
    		- \dfrac{1}{2\kappa}
    		+ o(1)
    		\right)
    	}{
    		o(1)
    		+ \dfrac{2R}{\mu} \dfrac{\bar \theta^{\mB}}{(\bar{u}^{\mB})^2}
    		+ \dfrac{2}{\mu}
    	}\\
    	&=
    	\dfrac{
    		\dfrac{R}{\kappa(\gamma-1)} \dfrac{\bar \theta^{\mB}}{(\bar{u}^{\mB})^2}
    		- \dfrac{1}{2\kappa}
    	}{
    		\dfrac{2R}{\mu} \dfrac{\bar \theta^{\mB}}{(\bar{u}^{\mB})^2}
    		+ \dfrac{2}{\mu}
    	}
    	+ o(1)\\
    	&=f(\alpha)+o(1),\qquad \bar{u}^{\mB}\to -\infty.
    \end{align*}
    Consequently, for any $\varepsilon>0$, there exists some $M>0$, such that
    \begin{align*}
    	\dfrac{\mathrm{d}\bar \theta^{\mB}}{\mathrm{d}\eta}<f(\alpha)+\varepsilon,
    	\qquad \bar{u}^{\mB}<-M.
    \end{align*}
    In other words,
    \begin{align*}
    	\dfrac{\mathrm{d}\alpha}{\mathrm{d}\eta}
    	=\dfrac{1}{\eta}\left(\dfrac{\mathrm{d}\bar \theta^{\mB}}{\mathrm{d}\eta}-\alpha\right)
    	<\dfrac{1}{\eta}\left(f(\alpha)-\alpha+\varepsilon\right),\qquad \eta>M^2.
    \end{align*}
    From \eqref{3.32}, set $\varepsilon=\dfrac{\varepsilon_0}{2}$, we have
    \begin{align*}
    	\dfrac{\mathrm{d}\alpha}{\mathrm{d}\eta}
    	<-\dfrac{\varepsilon_0}{2\eta},\qquad \eta>M^2.
    \end{align*}
    Integrating the above inequality over $[M^2,\eta]$, we deduce
    \begin{align*}
    	\alpha(\eta)-\alpha(M^2)<-\dfrac{\varepsilon_0}{2}
    	\log\dfrac{\eta}{M^2},\qquad \eta>M^2.
    \end{align*}
    This implies $\lim\limits_{\eta\to \infty}\alpha(\eta)=-\infty$, which contradicts \eqref{3.33}. Consequently, (B) is impossible, and hence (E) holds.
\end{proof}

Translating back through \eqref{eq:3.1} gives the curves $\Gamma_1$ and $\Gamma_2$ in the original variables. Their endpoints are excluded under the convention that boundary layer solutions are non-trivial and physically admissible. This completes the proof of part (a) of Theorem~\ref{thm:main}.

\subsection{Transonic case: $\Mplus=1$}

When $\Mplus=1$, one has
\[
\det A=0,
\qquad
\operatorname{tr}A
=\rho_+u_+\left(\dfrac{\gamma-1}{\gamma\mu}+\dfrac{R}{\kappa(\gamma-1)}\right)<0.
\]
Thus $A$ has one zero eigenvalue and one negative eigenvalue $\lambda_3$. Choose the two eigenvectors
\begin{equation*}
e_1=\left(1,-\dfrac{(\gamma-1)u_+}{R\gamma}\right),
\qquad
e_2=\left(1,\dfrac{\mu u_+}{\kappa(\gamma-1)}\right),
\label{eq:3.32}
\end{equation*}
and let $T=(e_1,e_2)$. Under the change of variables
\begin{equation*}
\binom{W_1}{W_2}=T^{-1}\binom{\bar{u}^{\mB}}{\bar\theta^{\mB}},
\label{eq:3.33}
\end{equation*}
the system takes the normal form
\begin{equation}
\begin{cases}
W_1'=g_1(W_1,W_2),\\
W_2'=\lambda_3W_2+g_2(W_1,W_2),
\end{cases}
\qquad \lambda_3<0,
\qquad g_1,g_2=O(|(W_1,W_2)|^2).
\label{eq:3.34}
\end{equation}

\begin{lemma}[Saddle-node criterion for one zero eigenvalue; see \cite{DumortierLlibreArtes2006,ZhangDingHuangDong1985}]
\label{lem:saddle-node-negative}
Consider the planar autonomous system
\begin{equation}
\begin{cases}
\dfrac{\mathrm{d}X}{\mathrm{d}t}=g_1(X,Y),\\[2mm]
\dfrac{\mathrm{d}Y}{\mathrm{d}t}=\lambda Y+g_2(X,Y),
\end{cases}
\qquad \lambda<0.
\label{eq:3.35}
\end{equation}
Assume that $(0,0)$ is an isolated singular point of \eqref{eq:3.35}, and that $g_1,g_2$ are analytic functions of order at least two in a sufficiently small neighborhood $B(0,\delta)$ of $(0,0)$. Let $Y=\phi(X)$, $|X|<\delta$, be the implicit function determined by
\[
\lambda\phi(X)+g_2(X,\phi(X))=0,
\]
and suppose that
\[
\psi(X):=g_1(X,\phi(X))=a_mX^m+o(X^m),
\qquad a_m\ne0,
\qquad m\ge2.
\]
Then the following assertions hold.
\begin{enumerate}[label=\textup{(\arabic*)}]
\item Any non-constant solution curve of \eqref{eq:3.35} which tends to $(0,0)$ as $t\to+\infty$ or $t\to-\infty$ must be tangent to one of the coordinate axes $X=0$ or $Y=0$.
\item If $m$ is odd and $a_m>0$, then $(0,0)$ is a saddle point.
\item If $m$ is odd and $a_m<0$, then $(0,0)$ is a stable node.
\item If $m$ is even, then $(0,0)$ is a saddle-node. More precisely, there exist two distinct solution curves tangent respectively to the positive and negative $Y$-axis and entering $(0,0)$. These two curves separate $B(0,\delta)$ into a parabolic sector and two hyperbolic sectors. If $a_m>0$, the parabolic sector lies on the left side and the two hyperbolic sectors lie on the right side (see Figure 10(a)); if $a_m<0$, the two hyperbolic sectors lie on the left side and the parabolic sector lies on the right side (see Figure 10(b)).
\end{enumerate}
\end{lemma}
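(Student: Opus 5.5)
The plan is to reduce everything to the one-dimensional dynamics on a center manifold, and then read off the local phase portrait from the sign and parity of the leading term. Throughout I use that $(0,0)$ is isolated and that $g_1,g_2$ are analytic of order at least two.

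\emph{Step 1 (tangency, assertion (1)).} I would use polar coordinates $X=r\cos\varphi$, $Y=r\sin\varphi$. For an orbit tending to $(0,0)$, the angle satisfies
\[
r\,\dot\varphi=\lambda\sin\varphi\cos\varphi+O(r).
\]
\begin{itemize}
\item By analyticity, every orbit entering the origin has a limiting direction $\varphi_0$. This is the standard Frommer/Briot--Bouquet argument: the set of characteristic directions is finite, and spiralling is excluded because $\sin\varphi\cos\varphi$ changes sign.
\item Such a limiting direction must be a zero of $G(\varphi)=\lambda\sin\varphi\cos\varphi$.
\item Since $\lambda\neq0$, the zeros of $G$ are exactly $\varphi\in\{0,\pi/2,\pi,3\pi/2\}$, that is, the coordinate axes.
\end{itemize}

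\emph{Step 2 (the reduced equation).} By the center manifold theorem there is a locally invariant curve $Y=h(X)$ with $h=O(X^2)$. Invariance gives
\[
h'(X)\,g_1(X,h(X))=\lambda h(X)+g_2(X,h(X)).
\]
\begin{itemize}
\item I will show that $g_1(X,h(X))$ and $\psi(X)=g_1(X,\phi(X))$ have the same leading term $a_mX^m$.
\item Set $F(X,Y)=\lambda Y+g_2(X,Y)$. Then $\partial_YF=\lambda+O(|X|+|Y|)$, so $F(X,\cdot)$ is invertible near $Y=0$.
\item Subtracting $F(X,\phi(X))=0$ from the invariance identity gives
\[
|h(X)-\phi(X)|\le C|h'(X)|\,|g_1(X,h(X))|=O(|X|)\,|g_1(X,h(X))|.
\]
\item Next, $g_1(X,h)-g_1(X,\phi)=O(|X|)\,|h-\phi|$, because $\partial_Yg_1=O(|X|+|Y|)$.
\item Combining the two bounds, $g_1(X,h)=\psi(X)+O(X^2)\,g_1(X,h)$.
\item Hence $g_1(X,h(X))=a_mX^m(1+o(1))$.
\end{itemize}
If a finite jet comparison is needed rather than $C^1$ estimates, I would instead run the argument with Taylor polynomials of $h$ of sufficiently high order.

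\emph{Step 3 (classification).} By the reduction principle (Shoshitaishvili/Pliss), since $\lambda<0$ the flow is locally topologically equivalent to
\[
\dot X=a_mX^m,\qquad \dot Y=\lambda Y.
\]
The cases then follow by direct inspection.
\begin{itemize}
\item If $m$ is odd and $a_m>0$, the center direction repels, so the origin is a saddle.
\item If $m$ is odd and $a_m<0$, both directions attract, so the origin is a stable node.
\item If $m$ is even, $\dot X$ has the sign of $a_m$ on both sides of $X=0$. When $a_m>0$, orbits with $X<0$ move toward the origin and give the parabolic sector on the left, while orbits with $X>0$ leave, giving two hyperbolic sectors on the right. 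When $a_m<0$ the picture is mirrored.
\item For even $m$, the two separatrices tangent to the positive and negative $Y$-axis are the two branches of the strong stable manifold.
\end{itemize}

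The main obstacle is Step 2, namely justifying that the reduced center-manifold equation inherits the leading term of $\psi$, given that the center manifold need not be analytic or unique. A secondary difficulty is the rigorous limiting-direction argument in Step 1. For both I would lean on the analytic normal-form theory in \cite{DumortierLlibreArtes2006,ZhangDingHuangDong1985}, where the lemma is stated.
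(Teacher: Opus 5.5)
The paper does not prove this lemma. It imports it from \cite{DumortierLlibreArtes2006,ZhangDingHuangDong1985}, where the classical argument works directly with the isocline $Y=\phi(X)$. One locates the characteristic directions, builds normal sectors around the $X$- and $Y$-directions, and uses the sign of $\psi$ along $Y=\phi(X)$ to decide how many orbits enter along each direction.

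Your route (center manifold, then the reduction principle) is a legitimate alternative, and its real content is Step~2, which is correct. The estimate $g_1(X,h(X))=\psi(X)\bigl(1+O(X^2)\bigr)$ holds for every center manifold, so non-uniqueness and finite smoothness of $h$ do no harm. It also explains why the non-invariant curve $Y=\phi(X)$ can stand in for the invariant one. Your approach is more conceptual and needs only finite smoothness. The price is relying on the center manifold theorem, Shoshitaishvili's theorem and the strong stable manifold theorem, whereas the sectorial argument is elementary and gives the tangency information directly.

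There is a small slip in Step~1: the correct relation is $\dot\varphi=\lambda\sin\varphi\cos\varphi+O(r)$, with no factor $r$ on the left. The conclusion is unchanged. Analyticity is not even needed there, since the zeros of $\sin 2\varphi$ are isolated and sign-changing.

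The one place your outline is thinner than it should be is the tangency claim in (4). This is exactly what the paper uses to get the tangency of $\Gamma_3',\Gamma_4'$ to $\tau_2'$. A topological equivalence does not preserve tangent directions, so Step~3 alone only gives two separating orbits. You still have to show that these are the two branches of the strong stable manifold $W^{ss}$, which is tangent to the $Y$-axis; you assert this but do not argue it.

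The cleanest fix is the invariant foliation of a neighborhood of $O$ by strong stable fibres based on the center manifold. In coordinates $(x,y)$, with $x$ the base point of the fibre, the first equation decouples exactly as $\dot x=g_1(x,h(x))$, and the fibre $\{x=0\}$ is $W^{ss}$. The parabolic/hyperbolic sector structure, bounded by the two branches of $W^{ss}$, then follows immediately from the sign of $a_m x^m$ on either side of $x=0$.
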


\begin{figure}[htbp]
	\centering
	\captionsetup[subfigure]
	{labelformat=parens}
	\begin{subfigure}{0.43\textwidth}
		\centering
		\includegraphics[width=\linewidth]{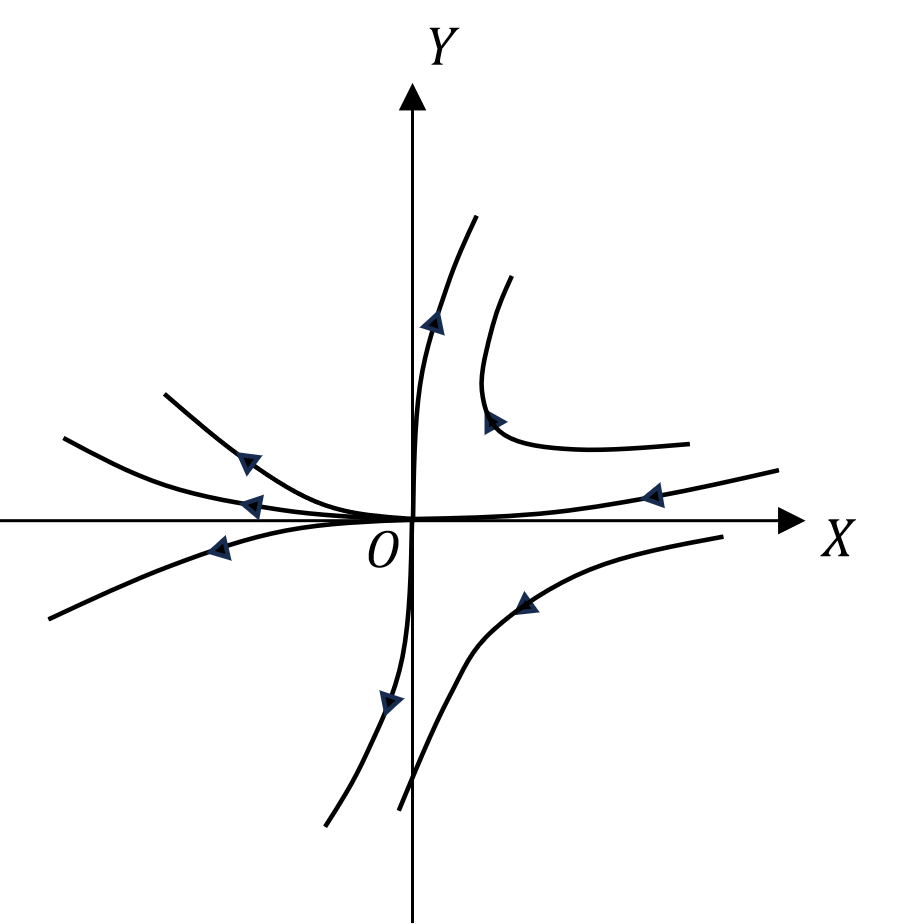}
		\caption{$a_m>0$.}
		\label{3.7:sn:a}
	\end{subfigure}
	\hfill 
	\begin{subfigure}{0.43\textwidth}
		\centering
		\includegraphics[width=\linewidth]{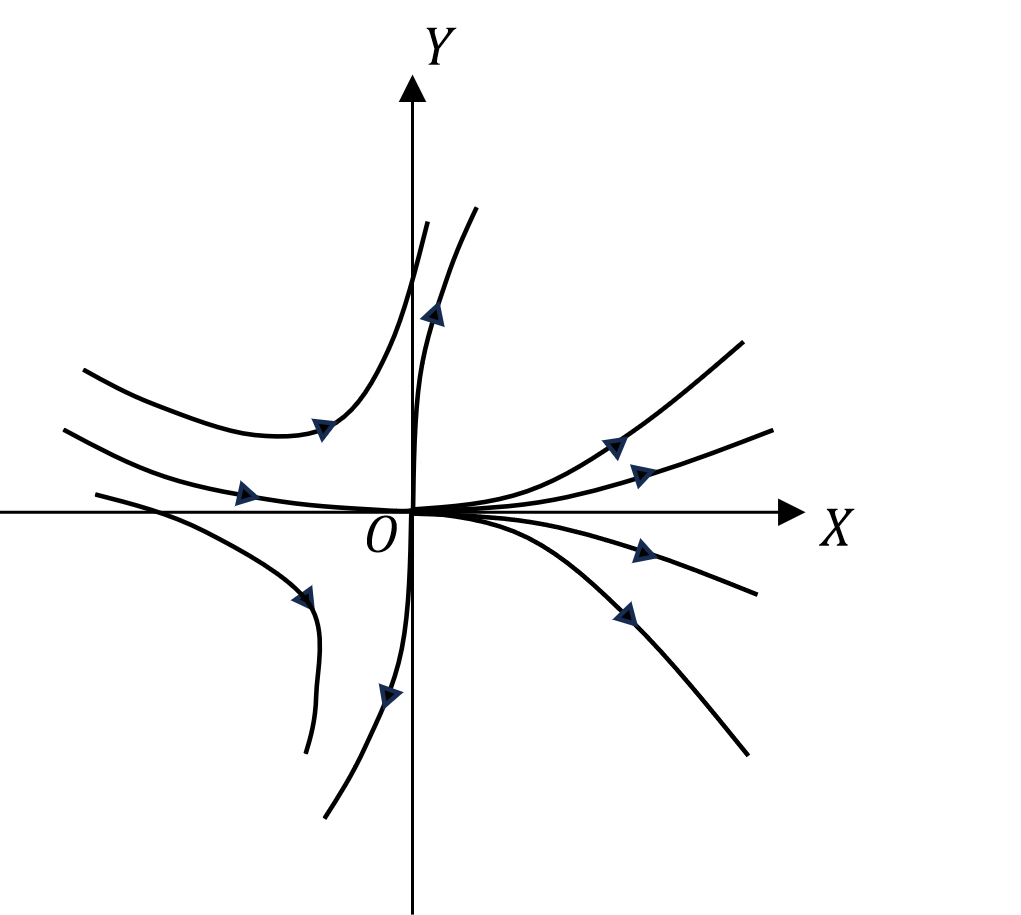}
		\caption{$a_m<0$.}
		\label{3.7:sn:b}
	\end{subfigure}
	\caption{Local phase portrait near a saddle-node equilibrium.}
	\label{3.7:saddle-node}
\end{figure}

Let $W_2=\phi(W_1)$ be defined by $\lambda_3\phi+g_2(W_1,\phi)=0$. A direct expansion gives
\begin{equation}
g_1(W_1,\phi(W_1))
=\dfrac{R\rho_+\gamma(\gamma+1)}{R\gamma\mu+\kappa(\gamma-1)^2}W_1^2+o(W_1^2).
\label{eq:3.36}
\end{equation}
Since the coefficient in \eqref{eq:3.36} is positive, Lemma~\ref{lem:saddle-node-negative} applies with $m=2$ and $a_2>0$. Thus $O$ is a saddle-node. More precisely, in the $(W_1,W_2)$-plane a sufficiently small neighborhood of $O$ is divided by two distinct solution curves tangent to the positive and negative $W_2$-axis and entering $O$; the left side is the parabolic sector and the right side consists of two hyperbolic sectors. Returning to the $(\bar{u}^{\mB},\bar\theta^{\mB})$-plane, the two separating incoming curves are tangent to
\begin{equation}
\tau_2':=\left\{(\bar{u}^{\mB},\bar \theta^{\mB})\;|\;\mu u_+\bar{u}^{\mB}-\kappa(\gamma-1)\bar\theta^{\mB}=0\right\}.
\label{eq:3.37}
\end{equation}
We denote the curve entering $O$ from the fourth quadrant by $\Gamma_3'$ and the curve entering from the second quadrant by $\Gamma_4'$.

We next continue the two boundary curves backward from $O$. The curve $\Gamma_3'$ enters the same region as $\Gamma_1'$ in the subsonic case. By a similar analysis, we see that the backward continuation of $\Gamma_3'$ must meet $L_0$ at a unique point $Z_3$. Every point on the open segment between $Z_3$ and $O$ then generates a boundary layer solution converging to $O$ as $x\to+\infty$.

The curve $\Gamma_4'$ initially enters the upper-left region. The signs of $H_1$ and $H_2$ are the same as in the subsonic analysis of $\Gamma_2'$, so the same continuation cases and comparison estimate apply. Thus situation (B) occurs when $\zeta\leq0$, whereas situation (E) occurs when $\zeta>0$. In the latter case, $\Gamma_4'$ meets $L_0$ at a unique point $Z_4$. In either case, every admissible point on $\Gamma_4'$ generates, in the forward direction, a boundary layer solution approaching the saddle-node $O$.

Let $\Sigma_1'$ be the open region enclosed by $\Gamma_3'$, $\Gamma_4'$ and $L_0$; see Figure~\ref{fig11}. A forward solution starting in $\Sigma_1'$ cannot cross either boundary curve, by uniqueness, and it cannot cross $L_0$ while remaining physical. It therefore stays in the region enclosed by $\Gamma_3'\cup\Gamma_4'\cup L_0$. The nullclines divide this region into finitely many parts with fixed signs of $H_1$ and $H_2$. Table 2 and Table 3 give the directions of the vector field and forward crossing directions on the nullclines of a solution curve. We can see that the crossing directions of solution curves are one-way, so a solution curve can cross the nullclines only finitely many times. It eventually remains in one part, where both components are monotone and bounded, and hence converges to a finite limit $Q$. Passing to the limit in \eqref{eq:3.7} gives $H_1(Q)=H_2(Q)=0$. The only equilibrium accessible from the interior of $\Sigma_1'$ is $O$, and therefore $(\bar{u}^{\mB},\bar\theta^{\mB})(x)\to O$ as $x\to+\infty$. Conversely, every non-trivial solution curve approaching $O$ from this side lies either on one of the two boundary curves or in the region between them. Translating $\Gamma_3'$, $\Gamma_4'$ without endpoints and $\Sigma_1'$ by $(u_+,\theta_+)$ gives $\Gamma_3$, $\Gamma_4$ and $\Sigma_1$. This proves the transonic part of the theorem.

\begin{table}[htbp]
	\centering
	\caption{Directions of the vector field in the transonic invariant region
		\(\Sigma_1'\).}
	\label{tab:transonic-vector-field}
	\begin{tabular}{cccc}
		\toprule
		Position of the point
		& \((\bar u^B)'\)
		& \((\bar\theta^B)'\)
		& Forward direction
		\\
		\midrule
		\(\displaystyle
		\bar\theta^B>h_2(\bar u^B)
		\)
		&
		+
		&
		--
		&
		right and downward
		\\[2mm]
		\(\displaystyle
		h_1(\bar u^B)
		<
		\bar\theta^B
		<
		h_2(\bar u^B)
		\)
		&
		+
		&
		+
		&
		right and upward
		\\[2mm]
		\(\displaystyle
		\bar\theta^B<h_1(\bar u^B)
		\)
		&
		--
		&
		+
		&
		left and upward
		\\
		\bottomrule
	\end{tabular}
\end{table}
\begin{table}[htbp]
	\centering
	\caption{Forward crossing directions on the nullclines in the
		transonic case \(M_+=1\).}
	\label{tab:transonic-crossing}
	\begin{tabular}{ccc}
		\toprule
		Nullcline
		& Forward crossing direction
		\\
		\midrule
		\(\displaystyle
		\bar\theta^B=h_1(\bar u^B)
		\)
		&
		\(\displaystyle
		\bar\theta^B<h_1(\bar u^B)
		\ \longrightarrow\
		\bar\theta^B>h_1(\bar u^B)
		\)
		\\[3mm]
		\(\displaystyle
		\bar\theta^B=h_2(\bar u^B)
		\)
		&
		\(\displaystyle
		\bar\theta^B<h_2(\bar u^B)
		\ \longrightarrow\
		\bar\theta^B>h_2(\bar u^B)
		\)
		\\
		\bottomrule
	\end{tabular}
\end{table}
\begin{figure}[htbp]
	\centering
	\begin{subfigure}{0.7\textwidth}
		\centering
		\includegraphics[width=\linewidth]{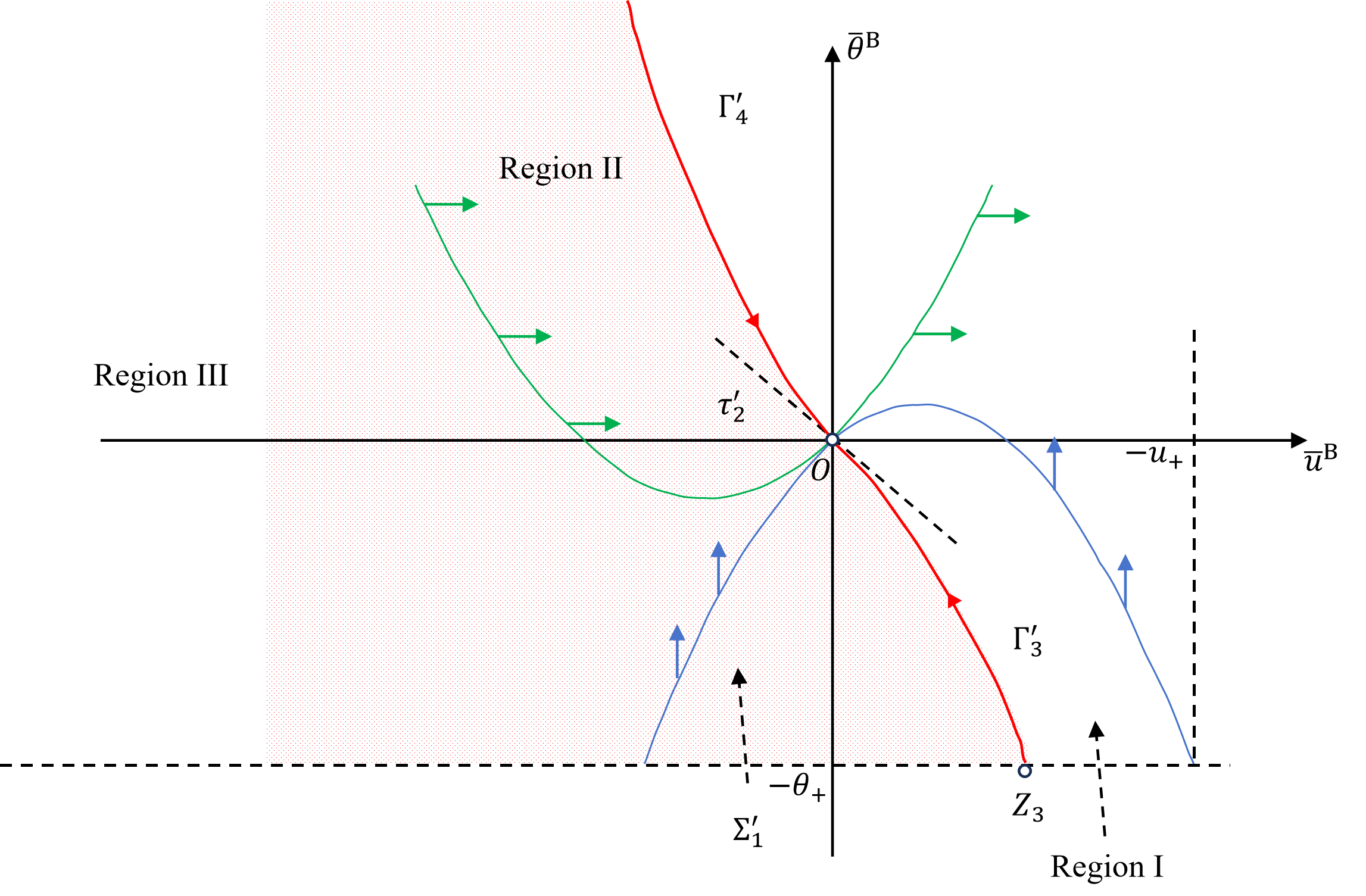}
		\caption{$\zeta\leq0.$}
		\label{fig11(1)}
	\end{subfigure}
	\hfill 
	\begin{subfigure}{0.7\textwidth}
		\centering
		\includegraphics[width=\linewidth]{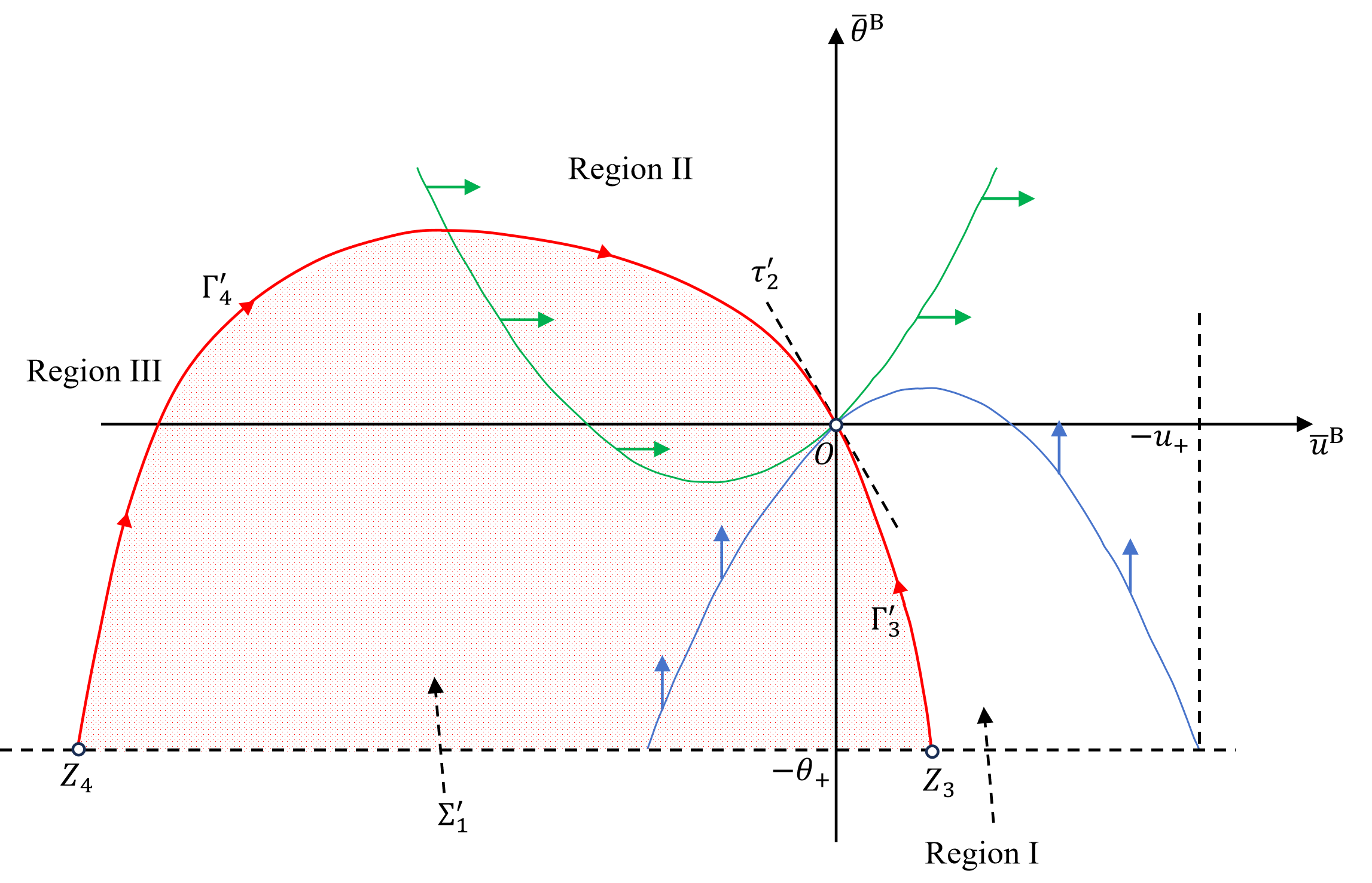}
		\caption{$\zeta>0.$}
		\label{fig11(2)}
	\end{subfigure}
	\caption{The shifted phase portraits in the transonic outflow case.}
	\label{fig11}
\end{figure}

\subsection{Supersonic case: $\Mplus>1$}

For $\Mplus>1$, the origin is a stable node. Thus a small neighborhood of $O$ is contained in the basin of initial states whose solution curves satisfy $(\bar{u}^{\mB},\bar\theta^{\mB})(+\infty)=(0,0)$. To determine the large-amplitude boundary of this basin, we use the second equilibrium of \eqref{eq:3.7}, namely
\begin{equation}
\begin{aligned}
P&=(\alpha_1u_+,\alpha_2\theta_+),\qquad
\alpha_1:=\dfrac{2(1-\Mplus^2)}{\Mplus^2(\gamma+1)}\in(-1,0),\\
\alpha_2&:=\dfrac{2(\Mplus^2-1)(\Mplus^2\gamma+1)(\gamma-1)}{\Mplus^2(\gamma+1)^2}>0.
\end{aligned}
\label{eq:3.38}
\end{equation}
Linearizing the vector field at $P$ gives the matrix
\begin{equation}
\widetilde A=
\begin{pmatrix}
\displaystyle
\dfrac{(\Mplus^2\gamma^2-3\Mplus^2\gamma+3\gamma-1)\rho_+u_+}{(\Mplus^2(\gamma-1)+2)\mu\gamma}
&
\displaystyle
\dfrac{R\rho_+\Mplus^2(\gamma+1)}{(\Mplus^2(\gamma-1)+2)\mu}
\\[3mm]
\displaystyle
\dfrac{\rho_+u_+^2(2\Mplus^2\gamma-\gamma+1)}{\Mplus^2\gamma(\gamma+1)\kappa}
&
\displaystyle
\dfrac{R\rho_+u_+}{\kappa(\gamma-1)}
\end{pmatrix},
\label{eq:3.39}
\end{equation}
A direct computation yields
\begin{equation}
\det \widetilde A
=-\dfrac{R\rho_+^2u_+^2(\gamma+1)(\Mplus^2-1)}{\kappa\mu(\gamma-1)(\Mplus^2(\gamma-1)+2)}<0,
\qquad \Mplus>1.
\label{eq:3.40}
\end{equation}
Hence $P$ is a saddle. Let $\lambda_4<0$ be the negative eigenvalue of $\widetilde A$. The two stable separatrices entering $P$ are denoted by $\Gamma_5'$ and $\Gamma_6'$, where $\Gamma_5'$ enters from the right-lower side and $\Gamma_6'$ from the left-upper side. They are tangent at $P$ to the straight line $\tau_3'$, which satisfies
\begin{equation}
	\left(\dfrac{(\Mplus^2\gamma^2-3\Mplus^2\gamma+3\gamma-1)\rho_+u_+}{(\Mplus^2(\gamma-1)+2)\mu\gamma}-\lambda_4\right)(\bar{u}^{\mB}-\alpha_1u_+)
+\dfrac{R\rho_+\Mplus^2(\gamma+1)}{(\Mplus^2(\gamma-1)+2)\mu}(\bar\theta^{\mB}-\alpha_2\theta_+)=0.
\label{eq:3.41}
\end{equation}

Define
\begin{equation}
l_2:=\{(\bar{u}^{\mB},\bar\theta^{\mB}):\bar\theta^{\mB}=h_1(\bar{u}^{\mB}),\ \alpha_1u_+\le\bar{u}^{\mB}\le -u_+\}.
\label{eq:3.42}
\end{equation}
Let Region IV be the region enclosed by $\bar{u}^{\mB}=\alpha_1u_+$, $L_0$ and $l_2$. On $l_2$ one has $H_1=0$, and the sign of $H_2$ shows that the vector field points into the region when the curve is traced backward from $P$. Along the relevant part of the vertical boundary $\bar{u}^{\mB}=\alpha_1u_+$, one has $\bar\theta^{\mB}<h_1(\alpha_1u_+)$ and hence $H_1<0$. The backward vector field therefore points into Region IV, so the separatrix cannot leave through this boundary. Since Region IV contains no equilibrium point except the endpoint $P$, the curve $\Gamma_5'$ must leave the region through the physical boundary $L_0$; denote the intersection point by $Z_5$. This curve is used as one component of the boundary of the admissible basin. Points on $\Gamma_5'$ themselves are not counted as admissible outflow boundary data in the supersonic case, because their positive $x$-extensions approach the saddle $P$ rather than the far-field equilibrium $O$.

We define Region V and Region VI as
\begin{equation*}
	\mathrm{V}:=\{(\bar{u}^{\mB},\bar\theta^{\mB})\;|\;\bar\theta^{\mB}> \max\{h_1(\bar{u}^{\mB}),h_2(\bar{u}^{\mB})\},\bar{u}^{\mB}\leq\alpha_1u_+\},
\end{equation*}
\begin{equation*}
	\mathrm{VI}:=\{(\bar{u}^{\mB},\bar\theta^{\mB})\;|\;h_1(\bar{u}^{\mB})\le \bar\theta^{\mB}\le h_2(\bar{u}^{\mB}),\ \bar{u}^{\mB}\le 0,\ \bar\theta^{\mB}>-\theta_+\}.
\end{equation*}
The curve $\Gamma_6'$ is the analogue of the curve $\Gamma_2'$. Since the signs of $H_1$ and $H_2$ are unchanged, the same continuation cases and comparison estimate apply. Thus situation (B) occurs when $\zeta\leq0$, whereas situation (E) occurs when $\zeta>0$. In the latter case, $\Gamma_6'$ meets $L_0$ at a unique point $Z_6$. This separatrix forms the other boundary of the basin. As for $\Gamma_5'$, points on $\Gamma_6'$ approach $P$ in the positive $x$-direction and therefore do not generate boundary layer solutions with far-field state $O$.

Let $\Sigma_2'$ be the open region enclosed by $\Gamma_5'$, $\Gamma_6'$ and $L_0$; see Figure~\ref{fig12}. A forward solution starting in $\Sigma_2'$ cannot cross the two stable separatrices, by uniqueness, and it cannot cross $L_0$ while remaining physical. Since the singular line $\mathcal S$ lies outside the region under consideration, the solution remains in the region enclosed by $\Gamma_5'\cup\Gamma_6'\cup L_0$. The nullclines divide this region into finitely many parts with fixed signs of $H_1$ and $H_2$. Table 4 and Table 5 give the directions of the vector field and forward crossing directions on the nullclines of a solution curve. Similarly, the crossing directions of solution curves are one-way, so the solution crosses the nullclines only finitely many times and eventually remains in one part. Both components are then monotone and bounded, and the solution converges to a finite limit $Q$. Passing to the limit in \eqref{eq:3.7} yields $H_1(Q)=H_2(Q)=0$. The saddle $P$ can be approached only along its stable separatrices, which form the boundary of $\Sigma_2'$. Hence an interior solution cannot converge to $P$, and its limit must be the stable node $O$.

Translating $\Gamma_5'$, $\Gamma_6'$ without endpoints and $\Sigma_2'$ by $(u_+,\theta_+)$ gives $\Gamma_5$, $\Gamma_6$ and $\Sigma_2$ in the original variables. Under our convention, the far-field point $(u_+,\theta_+)$, which gives the trivial constant solution, is excluded. Consequently, in the supersonic case a boundary layer solution exists if and only if
\[
(u_-,\theta_-)\in\Sigma_2\setminus\{(u_+,\theta_+)\}.
\]

Since the boundary layer solution converges to the far-field state,
there exists $X_0>0$ such that the boundary layer solution remains in a sufficiently
small neighborhood of the far-field equilibrium for all $x\ge X_0$.
The local invariant-manifold analysis can therefore be applied to the
tail of the boundary layer solution. The resulting exponential decay in the
nondegenerate cases and algebraic decay in the transonic case follow
as in \cite{KawashimaNakamuraNishibataZhu2010}. This completes the proof of Theorem~\ref{thm:main}.
\newpage

\begin{table}[htbp]
	\centering
	\renewcommand{\arraystretch}{1.4}
	\caption{Directions of the vector field in the supersonic
		invariant region \(\Sigma_2'\).}
	\label{tab:supersonic-vector-field}
	\begin{tabular}{cccc}
		\toprule
		Position of the point
		& \((\bar u^B)'\)
		& \((\bar\theta^B)'\)
		& Direction
		\\
		\midrule
		\(\displaystyle
		\bar\theta^B>
		\max\left\{
		h_1(\bar u^B),h_2(\bar u^B)
		\right\}
		\)
		&
		\(\displaystyle >0\)
		&
		\(\displaystyle <0\)
		&
		right and downward
		\\[2mm]
		\(\displaystyle
		h_1(\bar u^B)
		<
		\bar\theta^B
		<
		h_2(\bar u^B)
		\)
		&
		\(\displaystyle >0\)
		&
		\(\displaystyle >0\)
		&
		right and upward
		\\[2mm]
		\(\displaystyle
		h_2(\bar u^B)
		<
		\bar\theta^B
		<
		h_1(\bar u^B)
		\)
		&
		\(\displaystyle <0\)
		&
		\(\displaystyle <0\)
		&
		left and downward
		\\[2mm]
		\(\displaystyle
		\bar\theta^B<
		\min\left\{
		h_1(\bar u^B),h_2(\bar u^B)
		\right\}
		\)
		&
		\(\displaystyle <0\)
		&
		\(\displaystyle >0\)
		&
		left and upward
		\\
		\bottomrule
	\end{tabular}
\end{table}
\begin{table}[htbp]
	\centering
	\renewcommand{\arraystretch}{1.5}
	\caption{Crossing directions on the nullclines in the supersonic
		case \(M_+>1\).}
	\label{tab:supersonic-crossing}
	\begin{tabular}{ccc}
		\toprule
		Nullcline
		& Range of \(\bar u^B\)
		& Direction
		\\
		\midrule
		\(\displaystyle
		\bar\theta^B=h_1(\bar u^B)
		\)
		&
		\(\displaystyle
		\bar u^B<0
		\)
		&
		from below to above
		\\[2mm]
		\(\displaystyle
		\bar\theta^B=h_1(\bar u^B)
		\)
		&
		\(\displaystyle
		0<\bar u^B<\alpha_1u_+
		\)
		&
		from above to below
		\\[2mm]
		\(\displaystyle
		\bar\theta^B=h_2(\bar u^B)
		\)
		&
		\(\displaystyle
		\frac{u_+}{M_+^2\gamma}
		<
		\bar u^B
		<
		0
		\)
		&
		from above to below
		\\[2mm]
		\(\displaystyle
		\bar\theta^B=h_2(\bar u^B)
		\)
		&
		\(\displaystyle
		0<\bar u^B<\alpha_1u_+
		\)
		&
		from below to above
		\\
		\bottomrule
	\end{tabular}
\end{table}
\begin{figure}[htbp]
	\centering
	\begin{subfigure}{0.7\textwidth}
		\centering
		\includegraphics[width=\linewidth]{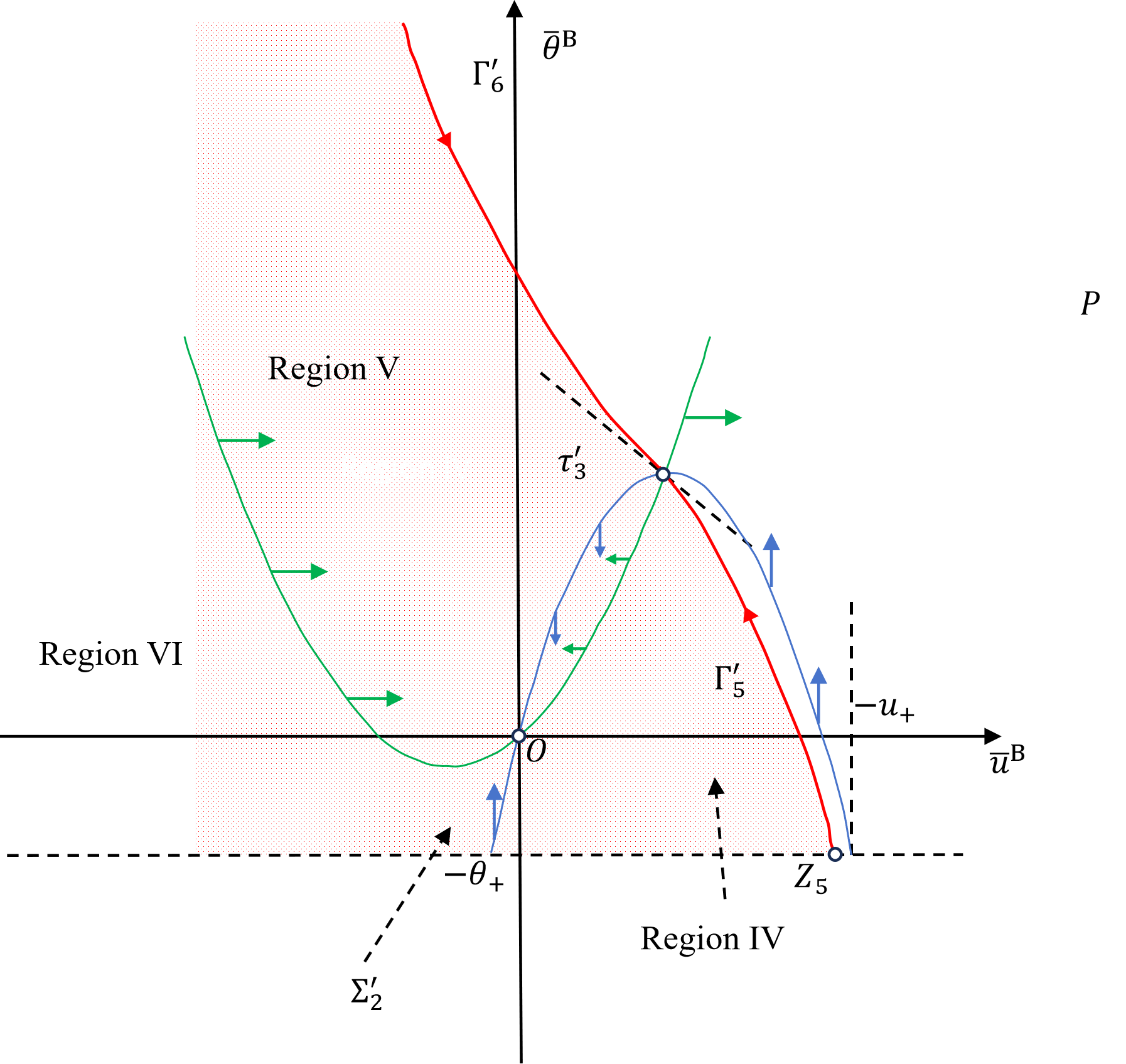}
		\caption{$\zeta\leq0.$}
		\label{fig12(1)}
	\end{subfigure}
	\hfill 
	\begin{subfigure}{0.7\textwidth}
		\centering
		\includegraphics[width=\linewidth]{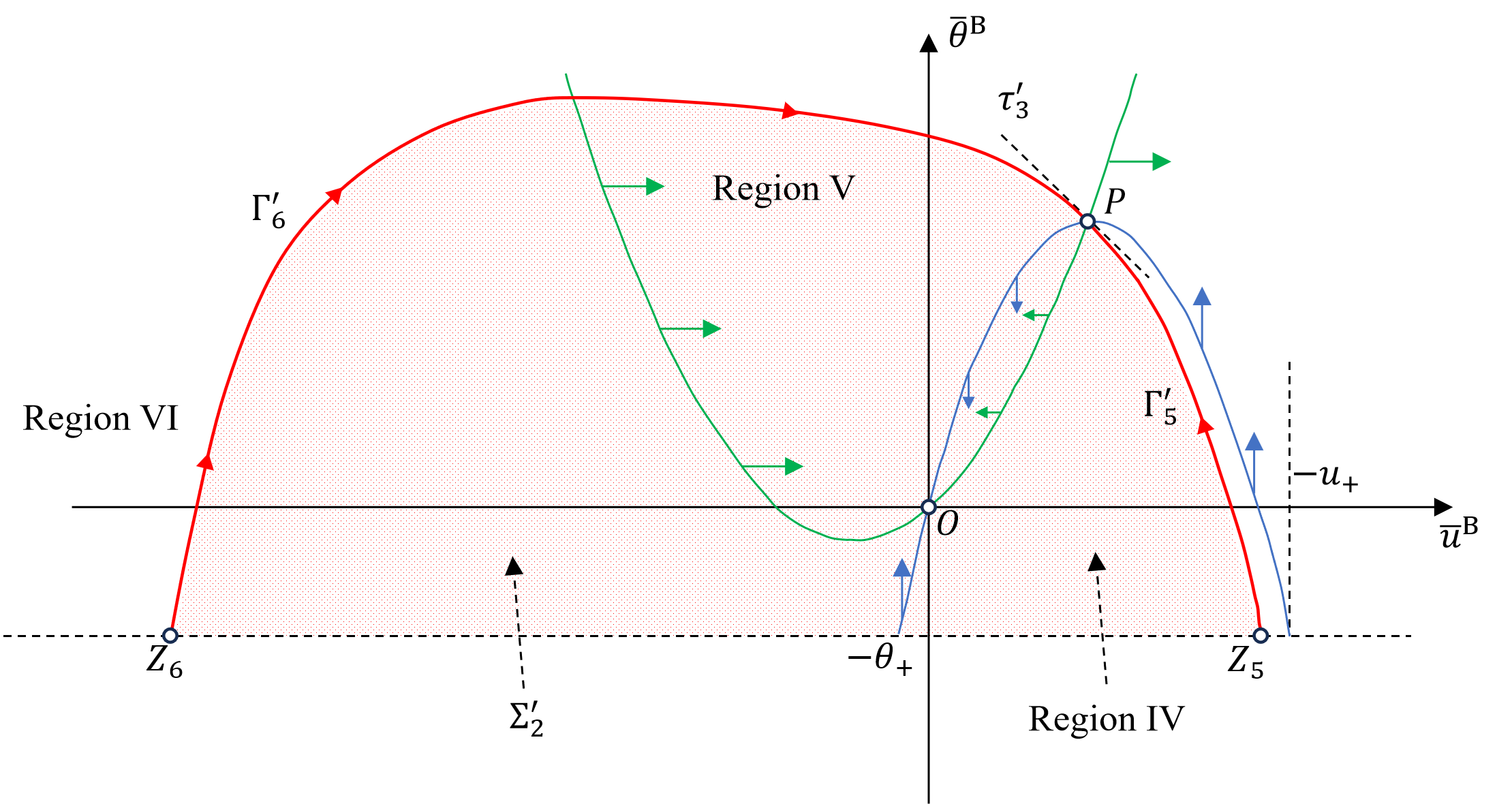}
		\caption{$\zeta>0.$}
		\label{fig12(2)}
	\end{subfigure}
	\caption{The shifted phase portraits in the supersonic outflow case.}
	\label{fig12}
\end{figure}


Finally, we make the following two remarks.
\begin{remark}

Theorem~\ref{thm:main} completely classifies the existence and non-existence of large-amplitude outflow boundary layer solutions to \eqref{eq:2.1}. Nevertheless, whether these large-amplitude boundary layer solutions are time-asymptotically stable for the initial–boundary value problem \eqref{eq:1.1}–\eqref{eq:1.4} remains largely open. The existing literature \cite{KawashimaNishibataZhu2003,NakamuraNishibataYuge2007,KageiKawashima2006,KawashimaNakamuraNishibataZhu2010,Qin2011,WanWangZou2016,ChenHongShi2019,ChenHongShi2021} is based on the relative energy methods which crucially rely on small boundary layer amplitudes. As such, these results cannot be applied to the large-amplitude solutions established in Theorem~\ref{thm:main}. Specifically, the large-amplitude boundary layer solutions established in Theorem~\ref{thm:main} may exhibit non-monotonicity. This feature severely limits standard energy methods, which generally require profile monotonicity or small gradients. 
\end{remark}

\begin{remark}
    We have established a global phase-plane classification of large-amplitude boundary layer solutions for the outflow problem governed by the one-dimensional full compressible Navier-Stokes equations \eqref{eq:1.1}. Compared to the inflow counterpart in \cite{WangYangYu2025}, the outflow problem exhibits distinct characteristics, including a potential loss of monotonicity, different backward continuations of the relevant trajectories, and the emergence of an additional saddle equilibrium in the supersonic case. By utilizing the comparison estimate for the quantity $\alpha={\bar\theta^{\mB}}/{(\bar u^{\mB})^2}$, we categorize the continuation patterns based on the sign of the parameter $\zeta$ defined in \eqref{eq:2.2}. Finally, invariant-region arguments are applied to determine the set of admissible boundary data.
\end{remark}

\end{document}